\theoremstyle{plain}
\newtheorem{thm}{Theorem}[section]
\newtheorem{lem}[thm]{Lemma}
\newtheorem{prop}[thm]{Proposition}
\newtheorem{cor}[thm]{Corollary}
\theoremstyle{definition}
\newtheorem{defi}[thm]{Definition}
\newtheorem{rem}[thm]{Remark}
\newtheorem{exam}{Example}
\newcommand{\R}{\mathbb R}
\newcommand{\Z}{\mathbb Z}
\newcommand{\nn}{\vskip 0.2cm}
\newcommand{\n}{\vskip 0.1cm}
\begin{document}

\title [\ ] {Small Covers, infra-solvmanifolds and curvature}

\author{Shintar\^{o} Kuroki$^\dag$}
\address{$^\dag$ Osaka City University Advanced Mathematical Institute (OCAMI),
      3-3-138 Sugimoto, Sumiyoshi-ku, Osaka 558-8585, Japan \newline \textit{and}
             Department of Mathematics, University of Toronto, Room 6290, 40 St.
     George Street, Toronto, Ontario, M5S 2E4, Canada}
 \email{kuroki@scisv.sci.osaka-cu.ac.jp, shintaro.kuroki@utoronto.ca}

\author{Mikiya Masuda$^\ddag$}
\address{$^\ddag$Department of Mathematics, Osaka City University, Sugimoto,
     Sumiyoshi-Ku, Osaka, 558-8585, Japan}
 \email{masuda@sci.osaka-cu.ac.jp}

\author{Li Yu*}
\address{*Department of Mathematics and IMS, Nanjing University, Nanjing, 210093, P.R.China
  }
 \email{yuli@nju.edu.cn}
 \thanks{2010 \textit{Mathematics Subject Classification}. 57N16, 57S17, 57S25,
 53C25, 51H30\\
 $^\dag$The author is partially supported by the JSPS Strategic Young Researcher Overseas Visits
  Program for Accelerating Brain Circulation
  ``Deepening and Evolution of Mathematics and Physics, Building of International Network Hub based on
  OCAMI''.\\
  $^\ddag$The author is partially supported by Grant-in-Aid for Scientific Research
  22540094.\\
 *The author is partially supported by JSPS (Grant No. P10018) and
 Natural Science Foundation of China (grant no.11001120). This work is also
 funded by the PAPD (priority academic program development) of Jiangsu higher education institutions.}


\keywords{Small cover, real moment-angle manifold, real Bott
  manifold, Ricci curvature, infra-solvmanifold, infra-nilmanifold, Coxeter group}


\begin{abstract}
  It is shown that a small cover (resp. real moment-angle manifold) over a simple polytope
  is an infra-solvmanifold if and only if it is diffeomorphic to a real Bott manifold (resp. flat torus).
  Moreover, we obtain several equivalent conditions for a small cover being homeomorphic to
  a real Bott manifold. In addition,
  we study Riemannian metrics on small
  covers and real moment-angle manifolds with certain conditions on the Ricci or sectional curvature.
   We will see that these curvature conditions put very strong
  restrictions on the topology of the corresponding small covers
  and real moment-angle manifolds and the combinatorial structures of
   the underlying simple polytopes.
 \end{abstract}

\maketitle

 \section{Introduction}
 The notion of small cover is first introduced by
  Davis and Januszkiewicz~\cite{DaJan91} as
  an analogue of a smooth projective toric variety in the category of closed manifolds
  with $\Z_2$-torus actions.
 An $n$-dimensional \emph{small cover} $M^n$ is a closed $n$-manifold
  with a locally standard $(\Z_2)^n$ action whose orbit space can be identified with
  a simple convex polytope $P^n$ in the Euclidean space $\R^n$.
   The $(\Z_2)^n$-action on $M^n$ determines a $(\Z_2)^n$-valued \emph{characteristic
       function} $\lambda_{M^n}$ on the facets of
      $P^n$, which encodes the information of isotropy subgroups of
      the non-free orbits. Conversely, we can recover
      $M^n$ and the $(\Z_2)^n$-action, up to equivariant homeomorphism,
       by gluing $2^n$ copies of $P^n$ according to the function $\lambda_{M^n}$.
     It is shown in~\cite{DaJan91} that many important topological invariants of $M^n$
     can be easily computed in terms of the combinatorial
      structure of $P^n$ and the $\lambda_{M^n}$. For example,
      the fundamental group of $M^n$ is a finite index subgroup of a right-angled Coxeter group
      $W_{P^n}$, where $W_{P^n}$ is canonically determined by $P^n$. Note that not all simple convex
      polytopes admit small covers over them. But for any simple convex polytope $P^n$,
      we can canonically associate a closed manifold $\R\mathcal{Z}_{P^n}$
      to $P^n$ called a \emph{real moment-angle manifold} (see~\cite{DaJan91}
      and~\cite{BP02}).
      \nn

      In this paper, we will
      mainly use fundamental groups to study different kinds of geometric structures on small
      covers and real moment-angle manifolds.
     The following are some results proved in this paper.\nn

   \begin{thm} If the real moment-angle manifold
      $\R\mathcal{Z}_{P^n}$ of an $n$-dimensional simple convex polytope $P^n$
      is homeomorphic to an infra-solvmanifold, then $P^n$ is an $n$-cube
      and $\R\mathcal{Z}_{P^n}$ is diffeomorphic to the $n$-dimensional flat torus.
     If a small cover is homeomorphic to an infra-solvmanifold, it must be
       diffeomorphic to a real Bott manifold $\mathrm{(see\ Corollary~\ref{Cor:Infra-Solv-2})}$.
 \end{thm}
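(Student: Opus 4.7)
The plan is to translate the topological hypothesis into an algebraic constraint on $\pi_1(\R\mathcal{Z}_{P^n})$, and then extract the combinatorics of $P^n$ from it. An infra-solvmanifold is aspherical and has virtually polycyclic fundamental group. The asphericity forces the nerve $L$ of $P^n$ (the boundary of the dual simplicial polytope) to be a flag simplicial complex, in which case the standard Davis--Januszkiewicz description identifies $\pi_1(\R\mathcal{Z}_{P^n})$ with the commutator subgroup $W'_{P^n}$ of the right-angled Coxeter group $W_{P^n}$. Since $W'_{P^n}$ has finite index in $W_{P^n}$, the hypothesis makes $W_{P^n}$ itself virtually solvable.

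Because Coxeter groups are linear via Tits' geometric representation, the Tits alternative rules out any non-abelian free subgroup in $W_{P^n}$; moreover, the structural theory of Coxeter groups (decomposition into spherical and affine irreducible pieces) shows that a virtually solvable Coxeter group is automatically virtually abelian. So $W_{P^n}$ must be virtually abelian. For a right-angled Coxeter group $W_L$ this rigidly pins down the shape of $L$: virtual abelianness is equivalent to a join decomposition
\[
L = \Delta^{a-1} * \underbrace{S^0 * \cdots * S^0}_{b \text{ factors}},
\]
giving $W_L \cong (\Z/2)^a \times (D_\infty)^b$. But $L$ must triangulate $S^{n-1}$, and the join of a non-empty simplex with any complex is contractible; hence $a=0$ and $b=n$, so $L$ is the boundary of the $n$-dimensional cross polytope. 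Its dual, and therefore $P^n$, is combinatorially an $n$-cube, whence $\R\mathcal{Z}_{P^n}$ is the $n$-torus $(S^1)^n$. The corollary for small covers then follows because every small cover $M$ is a finite (quotient) cover of $\R\mathcal{Z}_{P^n}$ and finite covers of infra-solvmanifolds are infra-solvmanifolds, so the cube conclusion applies and any small cover over the $n$-cube is diffeomorphic to a real Bott manifold.

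The main obstacle is the combinatorial step that converts virtual abelianness of $W_{P^n}$ into the cube structure of $P^n$: this needs both the classification of virtually abelian right-angled Coxeter groups and essential use of the topological constraint that $L$ is a simplicial $(n-1)$-sphere, in order to rule out any simplex factor in the join decomposition. The preceding inputs -- that infra-solvmanifolds are aspherical with virtually polycyclic fundamental group, the Tits alternative for linear (hence Coxeter) groups, and the fact that virtually solvable Coxeter groups are virtually abelian -- are imported as black boxes.
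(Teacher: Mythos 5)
Your proposal is correct, but the route you take through the combinatorics is genuinely different from the paper's. Both arguments begin the same way: asphericity of an infra-solvmanifold forces $P^n$ to be flag (the paper's Proposition~\ref{Prop:Asph-Flag}), and virtual polycyclicity of the fundamental group makes $W_{P^n}$ virtually solvable. From that point the paper's Proposition~\ref{prop:flag-polytop-solv} proceeds by an induction on faces: Lemma~\ref{Lem:Face-Flag} shows flagness passes to every facet and that $W_{F_k}$ injects into $W_{P^n}$, so virtual solvability of $\pi_1(\R\mathcal{Z}_{\,\cdot\,})$ propagates down to every $2$-face; a flag polygon with virtually solvable $\pi_1(\R\mathcal{Z})$ must be a $4$-gon, and a simple polytope all of whose $2$-faces are $4$-gons is combinatorially an $n$-cube (citing Ziegler). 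You instead argue globally on the Coxeter group: you import the equivalence of ``virtually solvable,'' ``virtually abelian,'' and ``every irreducible component spherical or affine'' for Coxeter groups, obtain $W_{P^n}\cong(\Z_2)^a\times(D_\infty)^b$, translate this (via rigidity of right-angled Coxeter groups, the paper's Theorem~\ref{thm:Rigid}) into the join decomposition $L=\Delta^{a-1}*(S^0)^{*b}$ of the flag nerve, and then kill the simplex factor by the observation that $L$ must be a simplicial $(n-1)$-sphere while a join with a nonempty simplex is contractible. Your version is tighter as a structural argument, at the cost of using more Coxeter-group theory as a black box; the paper's version is more elementary and self-contained. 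Interestingly, the paper does prove a close cousin of your key algebraic input in Lemma~\ref{Lem:Vir-Nilp} (virtually nilpotent $W_P$ is $(\Z_2)^k\times(\widetilde A_1)^l$, via Qi's theorem), but it deploys that lemma only for the curvature applications in \S 5, not for this theorem. The small-cover clause is handled the same way in both: $\R\mathcal{Z}_{P^n}$ finitely covers any small cover over $P^n$, so the cube conclusion applies, and a small cover over an $n$-cube is a real Bott manifold.
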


  \begin{thm}
   Let $\R\mathcal{Z}_{P^n}$ be the real moment-angle manifold of an $n$-dimensional
   simple convex polytope $P^n$.
  \begin{itemize}
    \item[(i)] $\R\mathcal{Z}_{P^n}$ admits a Riemannian metric with positive
    constant sectional curvature if and only if $P^n$ is an
    n-simplex $\mathrm{(see\ Theorem~\ref{thm:Spherical})}$.\n

    \item[(ii)]  $\R\mathcal{Z}_{P^n}$ admits a flat Riemannian
   metric if and only if $P^n$ is an n-cube $\mathrm{(see\ Corollary~\ref{Cor:Infra-Solv-2})}$.\n

   \item[(iii)] If $\R\mathcal{Z}_{P^n}$ admits a Riemannian
    metric with negative (not necessarily constant) sectional curvature, then
    no $2$-face of $P^n$ can be a $3$-gon or a $4$-gon
    $\mathrm{(see\ Proposition~\ref{Prop:Negative-Curv})}$.
   \end{itemize}
  \end{thm}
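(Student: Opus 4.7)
The common thread in all three parts is the identification $\pi_1(\R\mathcal{Z}_{P^n}) \cong [W_{P^n}, W_{P^n}]$, where $W_{P^n}$ is the right-angled Coxeter group generated by reflections in the facets of $P^n$; translating each Riemannian hypothesis into an algebraic restriction on this commutator subgroup via a classical rigidity theorem drives every implication. I also use the standard Coxeter-theoretic fact that for any face $F$ of $P^n$, the reflections in the facets of $P^n$ containing $F$ generate a parabolic subgroup canonically isomorphic to $W_F$, so that $W_F \hookrightarrow W_{P^n}$.

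For (i), one direction is clear since $\R\mathcal{Z}_{\Delta^n}$ is the boundary of the $(n+1)$-dimensional cross-polytope, i.e.\ the standard $S^n$ with its round metric. For the converse, a closed $n$-manifold of positive constant sectional curvature is a spherical space form and hence has finite fundamental group. Since $[W_{P^n}, W_{P^n}]$ has index $2^m$ in $W_{P^n}$ (where $m$ is the number of facets), the whole group $W_{P^n}$ must then be finite, and a right-angled Coxeter group is finite exactly when all of its generators pairwise commute. For $P^n$ this means every pair of facets is adjacent, forcing $P^n$ to be a simplex.

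Part (ii) is essentially a direct corollary of the preceding theorem: one direction is that $T^n = \R\mathcal{Z}_{I^n}$ carries the standard flat metric, while for the converse any closed flat Riemannian manifold is Bieberbach and in particular an infra-solvmanifold, so the preceding theorem forces $P^n$ to be an $n$-cube.

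For (iii), assume $\R\mathcal{Z}_{P^n}$ admits a metric of negative sectional curvature; then by Cartan--Hadamard it is aspherical, and by Preissman's theorem its fundamental group contains no $\Z\oplus\Z$ subgroup. Suppose some $2$-face $F = F_{i_1}\cap\cdots\cap F_{i_{n-2}}$ were a triangle, and let $F_{j_1},F_{j_2},F_{j_3}$ be the facets supporting its three edges. All pairs among the $n+1$ facets $F_{i_1},\ldots,F_{i_{n-2}},F_{j_1},F_{j_2},F_{j_3}$ meet non-trivially, since each pair's intersection contains either $F$ itself, an edge of $F$, or a vertex of $F$; however $F\cap F_{j_1}\cap F_{j_2}\cap F_{j_3}$ coincides with the intersection of the three sides of the triangle, and is therefore empty. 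This produces a clique of size $n+1$ in the nerve complex $\partial P^*$ that is not a simplex, so $\partial P^*$ is not flag, and by Davis's asphericity criterion $\R\mathcal{Z}_{P^n}$ cannot be aspherical---a contradiction. Suppose instead some $2$-face is a $4$-gon: the four facets $F_{j_1},F_{j_2},F_{j_3},F_{j_4}$ supporting its sides commute cyclically but not across, so the parabolic $W_F$ they generate splits as $D_\infty\times D_\infty$. Its commutator $[W_F,W_F]\cong\Z\oplus\Z$ then embeds into $[W_{P^n},W_{P^n}] = \pi_1(\R\mathcal{Z}_{P^n})$, violating Preissman's theorem. The main obstacle is the appeal to Davis's asphericity criterion for real moment-angle manifolds, which is a deep but standard fact that I would cite from the literature rather than reprove.
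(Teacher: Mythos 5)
The argument for part (i) contains a genuine logical gap. You correctly deduce that a positive constant-curvature metric forces $\pi_1(\R\mathcal{Z}_{P^n})$ to be finite, hence $W_{P^n}$ finite, hence every pair of generators commutes, hence every pair of facets of $P^n$ is adjacent. But the final jump---``forcing $P^n$ to be a simplex''---is false: a simple polytope all of whose facets are pairwise adjacent is precisely a \emph{$2$-neighborly} polytope, and for $n\ge 4$ there are plenty of $2$-neighborly simple polytopes that are not simplices (the paper itself points out that duals of cyclic polytopes $C(k,n)^*$ with $k>n+1$ are $2$-neighborly for $n\ge 4$). The paper instead argues as follows: finiteness of $\pi_1$ gives simple connectivity (Corollary~\ref{Cor:virtually-nilpotent}), and a simply connected closed $n$-manifold of positive constant sectional curvature is diffeomorphic to $S^n$; and to pin down the polytope in the small-cover case, it invokes Smith theory on the $(\Z_2)^{r-n}$-deck action of $\R\mathcal{Z}_{P^n}\cong S^n$ over $M^n$ to force $r-n\le 1$, i.e.\ $P^n=\Delta^n$. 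Your deduction bypasses the geometric input entirely and needs it.

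Two smaller remarks. First, the ``standard Coxeter-theoretic fact'' you invoke at the outset is misstated: the reflections in the facets of $P^n$ \emph{containing} a codimension-$k$ face $F$ pairwise commute and generate $(\Z_2)^k$, not $W_F$. What you actually use in part (iii) is the embedding $W_f\hookrightarrow W_{P^n}$ for a $2$-face $f$ induced by the facets of $P^n$ that \emph{cut out} the edges of $f$; this embedding is not automatic but does hold once $P^n$ is flag, which is exactly what Lemma~\ref{Lem:Face-Flag} and the iterative argument in Proposition~\ref{prop:flag-polytop-solv} establish in the paper. Second, your handling of parts (ii) and (iii) is otherwise in line with the paper's approach: (ii) reduces to the infra-solvmanifold theorem as you say, and for (iii) the paper likewise uses Cartan--Hadamard plus flagness for the $3$-gon and Preissmann plus the embedding $\pi_1(\R\mathcal{Z}_f)\cong\Z\oplus\Z\hookrightarrow\pi_1(\R\mathcal{Z}_{P^n})$ for the $4$-gon; your direct ``bad clique'' argument for the $3$-gon is a perfectly good alternative phrasing of the same flagness violation.
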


        The paper is organized as follows. In section 2, we
     study when the fundamental group of a real moment-angle
     manifold (or a small cover) over a simple convex polytope is virtually nilpotent or finite
     (Corollary~\ref{Cor:virtually-nilpotent} and Corollary~\ref{Cor:Finite-Fund-Group}).
     Then we introduce a special class of small covers called
     \emph{generalized real Bott manifolds} which are the main examples
     related to our study in this paper.
     In section 3, we study when a small cover or real moment-angle manifold
     is an infra-solvmanifold. It turns out that such a small cover (or real moment-angle manifold)
     must be a real Bott manifold (or flat torus) (Corollary~\ref{Cor:Infra-Solv-2}).
     The proof essentially uses a nice result in~\cite{DavJanScott98} that describes the asphericality of
      small covers in terms of flagness of the underlying simple polytope.
      In addition, we will briefly discuss the
       smooth structures on small covers and real moment-angle
      manifolds in the middle.
      In section 4, we obtain several equivalent conditions for a small cover to be
       homeomorphic to
     a real Bott manifold (Theorem~\ref{thm:Test-Real-Bott}).
     In section 5, we study Riemannian metrics on small
     covers and real moment-angle manifolds with various conditions on the Ricci or sectional curvature.
     Most of the results obtained in this section follow from the study of fundamental groups in section 2.
     In addition, some problems are proposed for future study.
     \\

   \section{Right-angled Coxeter group and fundamental group}

     Suppose $P^n$ is an $n$-dimensional simple convex polytope in the Euclidean space $\R^n$.
     Here the word ``simple'' means that any vertex of $P^n$ is the
     intersection of exactly $n$ different facets of $P^n$.
     Let $\mathcal{F}(P^n)$ denote the set of all facets of $P^n$.
      Let $W_{P^n}$ be a right-angled Coxeter group with one generator for each facet of
     $P^n$ and relations $s^2=1$, $\forall\,s\in \mathcal{F}(P^n)$, and $(st)^2 =1$ whenever
      $s,t$ are adjacent facets of $P^n$.  \nn

    \textbf{Remark:} Although we call $W_{P^n}$ a right-angled Coxeter
      group, the dihedral angle between two adjacent facets of $P^n$ may not be a right angle in reality. So
   generally speaking,
   $W_{P^n}$ is not the group generated by the reflections of $\R^n$
   about the hyperplanes passing the facets of $P^n$.\nn

  Suppose $F_1,\cdots, F_r$ are all the facets of $P^n$. Let $e_1,\cdots, e_r$ be a basis of
  $(\Z_2)^r$. Then we define a function $\lambda_0 : \mathcal{F}(P^n) \rightarrow (\Z_2)^r$ by
    \begin{equation}\label{equ:Lambda_0}
     \lambda_0(F_i) = e_i, \ 1\leq i \leq r.
    \end{equation}
      For any proper face $f$ of $P^n$, let $G_f$ denote the subgroup of $(\Z_2)^r$ generated by
  the set $\{ \lambda_0(F_i) \, |\, f\subset F_i \}$. For any point $p\in
  P^n$, let $f(p)$ denote the unique face of $P^n$ that contains $p$ in
  its relative interior. In~\cite[Construction 4.1]{DaJan91},
 the \emph{real moment-angle manifold} $\R \mathcal{Z}_{P^n}$ of $P^n$ is
  defined to be the following quotient space
    \begin{equation} \label{Equ:Real-Moment-Angle}
     \R \mathcal{Z}_{P^n} := P^n\times (\Z_2)^r \slash \sim
    \end{equation}
  where $(p,g) \sim (p',g')$ if and only if $p=p'$ and $g^{-1}g' \in G_{f(p)}$.
       It is shown in~\cite{DaJan91} that the fundamental group of
        $\R \mathcal{Z}_{P^n}$ is the kernel of
        the abelianization $\text{Ab}\colon W_{P^n}\to W^{ab}_{P^n}\cong(\Z_2)^r$,
        that is, there is an exact sequence
    \begin{equation} \label{Equ:Seq-2}
     1 \longrightarrow  \pi_1(\R \mathcal{Z}_{P^n})  \longrightarrow W_{P^n}
      \overset{\text{Ab}}{\longrightarrow} (\Z_2)^r \longrightarrow 1,
      \end{equation}
  so we have
  \[ \pi_1(\R \mathcal{Z}_{P^n}) = \mathrm{ker}(\text{Ab})
    = [W_{P^n},W_{P^n}] \ \text{(the commutator subgroup of $W_{P^n}$)}.  \]

    A \emph{small cover} $M^n$ over $P^n$ is a closed $n$-manifold with a locally standard
    $(\Z_2)^n$-action so that its orbit space is homeomorphic to
    $P^n$. Here ``locally standard'' means that any point in $M^n$ has a
    $(\Z_2)^n$-invariant open neighborhood which is equivariantly
    homeomorphic to a $(\Z_2)^n$-invariant open subset in an
    $n$-dimensional faithful linear representation space of $(\Z_2)^n$.
    Let $\pi: M^n \rightarrow P^n$ be the quotient map.
    For any facet $F_i$ of $P^n$, the isotropy subgroup of
    $\pi^{-1}(F_i)$ in $M^n$ under the $(\Z_2)^n$-action is
    a rank one subgroup of $(\Z_2)^n$ generated by a nonzero element, say $g_{F_i} \in (\Z_2)^n$.
    Then we obtain a map $\lambda_{M^n}: \mathcal{F}(P^n) \rightarrow
    (\Z_2)^n$ where
     $$ \lambda_{M^n}(F_i) = g_{F_i}, \ 1\leq i \leq r. $$
     We call $\lambda_{M^n}$ the \emph{characteristic function} associated
     to $M^n$. It is shown in~\cite{DaJan91} that
     up to equivariant homeomorphism, $M^n$
     can be recovered from $(P^n,\lambda_{M^n})$ in a similar way
     to the construction of $\R \mathcal{Z}_{P^n}$
     in~\eqref{Equ:Real-Moment-Angle}, that is
     \begin{equation} \label{Equ:Small-Cover}
        M^n = P^n\times (\Z_2)^n \slash \sim
     \end{equation}
      where $(p,g) \sim (p',g')$ if and only if $p=p'$ and $g^{-1}g' \in G^{\lambda_{M^n}}_{f(p)}=$
      the subgroup of $(\Z_2)^n$ generated by $\{ \lambda_{M^n}(F_i) \, |\, f(p)\subset F_i
      \}$.\nn

     Moreover, $\lambda_{M^n}$ determines a group homomorphism
     $\overline{\lambda}_{M^n} : (\Z_2)^r \rightarrow (\Z_2)^n$
     where
      \[  \overline{\lambda}_{M^n}(e_i) = \lambda_{M^n} (F_i) = g_{F_i},  \ 1\leq i \leq r. \]
     It is shown~in \cite{DaJan91} that the fundamental group $\pi_1(M^n)$ of $M^n$
     is isomorphic to the kernel of the composition $\overline{\lambda}_{M^n}\circ \text{Ab}$,
     that is, there is an exact sequence
       \begin{equation} \label{Equ:Fund-Group}
      1 \longrightarrow  \pi_1(M^n)  \longrightarrow W_{P^n}
      \overset{\overline{\lambda}_{M^n}\circ \text{Ab}}{\longrightarrow} (\Z_2)^n \longrightarrow 1.
       \end{equation}
  Then it follows from \eqref{Equ:Seq-2} and \eqref{Equ:Fund-Group} that we have an exact sequence
   \begin{equation} \label{Equ:moment-small}
     1 \longrightarrow  \pi_1(\R \mathcal{Z}_{P^n})  \longrightarrow \pi_1(M^n)
      \longrightarrow (\Z_2)^{r-n} \longrightarrow 1.
      \end{equation}
 In fact, it is easy to see that $\R \mathcal{Z}_{P^n}$ is a regular $(\Z_2)^{r-n}$-covering of any small
 cover over $P^n$.\nn

  \begin{prop} \label{Prop:2-neighborly}
For an $n$-dimensional simple convex polytope $P^n$, the following are equivalent:
  \begin{enumerate}
    \item[(i)] $P^n$ is $2$-neighborly (i.e. any two facets of $P^n$ are adjacent).\n
    \item[(ii)] The real moment-angle manifold  $\R\mathcal{Z}_{P^n}$  over
               $P^n$ is simply connected. \n
  \end{enumerate}
 Moreover, if there exists a small cover $M^n$ over $P^n$, then the above statements
 are also equivalent to the following:
 \begin{enumerate}
     \item[(iii)]
      The fundamental group of $M^n$ is isomorphic to $(\Z_2)^{r-n}$, where $r$
    is the number of facets of $P^n$.\n
\end{enumerate}

 \end{prop}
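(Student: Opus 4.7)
The plan is to exploit the presentation of the right-angled Coxeter group $W_{P^n}$ together with the identification $\pi_1(\R\mathcal{Z}_{P^n}) = [W_{P^n},W_{P^n}]$ given by the sequence \eqref{Equ:Seq-2}, and then read off (iii) from the exact sequence \eqref{Equ:moment-small}.

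For the implication (i) $\Rightarrow$ (ii), I would argue as follows. If every pair of facets of $P^n$ is adjacent, then the defining relations of $W_{P^n}$ include $(s_is_j)^2=1$ for all $i,j$, so every pair of generators commutes. Combined with $s_i^2=1$, this shows that $W_{P^n}$ is abelian and in fact isomorphic to $W_{P^n}^{ab}\cong(\Z_2)^r$. The commutator subgroup $[W_{P^n},W_{P^n}]$ is then trivial, and so $\pi_1(\R\mathcal{Z}_{P^n})=1$ by \eqref{Equ:Seq-2}.

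The converse (ii) $\Rightarrow$ (i) is the step that needs a little care and is the main obstacle. Simple connectivity of $\R\mathcal{Z}_{P^n}$ forces $[W_{P^n},W_{P^n}]=1$, i.e. $W_{P^n}$ is abelian, so $s_is_j=s_js_i$ holds in $W_{P^n}$ for every pair $i,j$. To conclude that this forces $F_i$ and $F_j$ to be adjacent whenever $i\neq j$, I would invoke the standard fact on right-angled Coxeter groups: two distinct generators $s_i,s_j$ of $W_{P^n}$ commute in the group if and only if the corresponding relation $(s_is_j)^2=1$ is among the defining relations, equivalently iff $F_i\cap F_j\neq\emptyset$. (This can be seen from the solution of the word problem for Coxeter groups, or from the faithfulness of the Tits/Davis geometric representation applied to the rank-two parabolic subgroup $\langle s_i,s_j\rangle$, which is infinite dihedral whenever $F_i,F_j$ are non-adjacent.) Hence all pairs of facets are adjacent, i.e.\ $P^n$ is $2$-neighborly.

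Finally, suppose a small cover $M^n$ over $P^n$ exists; I would derive the equivalence of (iii) with (ii) directly from the exact sequence
\begin{equation*}
1 \longrightarrow \pi_1(\R\mathcal{Z}_{P^n}) \longrightarrow \pi_1(M^n) \longrightarrow (\Z_2)^{r-n} \longrightarrow 1
\end{equation*}
of \eqref{Equ:moment-small}. Indeed, (ii) immediately gives $\pi_1(M^n)\cong(\Z_2)^{r-n}$, yielding (iii); conversely, if $\pi_1(M^n)\cong(\Z_2)^{r-n}$ then comparing orders in the short exact sequence forces $\pi_1(\R\mathcal{Z}_{P^n})$ to be trivial, which is (ii). This closes the cycle of implications.
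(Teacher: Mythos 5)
Your proof is correct and follows essentially the same route as the paper: reduce (i)$\Leftrightarrow$(ii) to the statement that $W_{P^n}$ is abelian (equivalently $\cong(\Z_2)^r$) precisely when $P^n$ is $2$-neighborly, then read off (ii)$\Leftrightarrow$(iii) from the short exact sequence \eqref{Equ:moment-small}. The only difference is that you spell out the nontrivial half of that Coxeter-group fact --- that non-adjacent generators genuinely fail to commute, via the faithful geometric representation or the word problem --- which the paper compresses into the phrase ``by definition.''
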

 \begin{proof}
   By definition, $P^n$ is $2$-neighborly if and only if $W_{P^n}$ is isomorphic to $(\Z_2)^r$ where $r$ is the number
   of facets of $P^n$. Then the equivalence between (i) and (ii) follows from
   \eqref{Equ:Seq-2}, and the equivalence between (ii) and
 (iii) follows from \eqref{Equ:moment-small}.
    \end{proof}
 \nn

     All the relations between the generators of $W_{P^n}$ can be formally represented by a
      matrix $m=(m_{st})$, called \emph{Coxeter matrix}.
      \[ m_{st} := \left\{
           \begin{array}{ll}
             1, & \hbox{if $s=t$;} \\
             2, & \hbox{if $s$ is adjacent to $t$;} \\
             \infty, & \hbox{otherwise.}
           \end{array}
         \right. \  \text{($s,t$ denote arbitrary facets of $P^n$).} \]
     The triple $(W_{P^n}, \mathcal{F}(P^n), m)$ is called a \emph{Coxeter system} of $W_{P^n}$. \nn

   For a general Coxeter system $(W, S, m)$, its \emph{Coxeter graph} is a graph with a vertex
  set $S$, and with two vertices $s \neq t$ joined by an edge whenever
  $m_{st} \geq 3$. If $m_{st} \geq 4$, the corresponding edge is labeled by $m_{st}$.
  We say that $(W, S, m)$ is \emph{irreducible} if its Coxeter graph is
  connected.\nn

   A Coxeter group $W$ is called \emph{rigid} if, given any two systems $(W, S, m)$ and
   $(W, S',m')$ for $W$; there is an automorphism $\rho : W \rightarrow W$
      such that $\rho(S,m) = (S',m')$, i.e.
     the Coxeter graphs of $(W, S,m)$ and $(W, S,m')$ are isomorphic.\nn

  \begin{thm}[Radcliffe~\cite{Radc03}] \label{thm:Rigid}
       If $(W,S,m)$ is a Coxeter system with
        $m_{st} \in \{ 2, \infty \}$ for all $s\neq t \in S$, then W is rigid.
        In other words, any right-angled Coxeter group is rigid.
      \end{thm}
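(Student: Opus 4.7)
The plan is to show that the combinatorial data $(S,m)$ is recoverable from the abstract group $W$ up to automorphism. Since $m_{st}\in\{2,\infty\}$ for $s\neq t$, the matrix $m$ is equivalent to the commutation graph on $S$, so it suffices to exhibit, for any two right-angled generating systems $(W,S,m)$ and $(W,S',m')$, an automorphism $\rho$ of $W$ carrying $S$ to $S'$ and preserving adjacency in the commutation graph.

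First I would note that the rank $|S|$ is intrinsic to $W$: the abelianization $W^{ab}$ is isomorphic to $(\Z_2)^{|S|}$, since in the right-angled case every $s\in S$ has order $2$ and the distinct $s\in S$ map to an $\F_2$-basis; hence $|S|=|S'|$. Next, one characterizes the set of reflections $T=\bigcup_{w\in W} wSw^{-1}$ intrinsically in $W$, for instance as the collection of involutions whose centralizer has a prescribed ``wall-like'' direct product structure — a property depending only on the abstract group $W$. Within $T$, the simple reflections are then singled out by a decomposition property of their centralizers: for $s\in S$ in the right-angled case, $C_W(s)=\langle s\rangle\times W_{\mathrm{Lk}(s)}$, where $W_{\mathrm{Lk}(s)}$ is the right-angled Coxeter subgroup generated by the elements of $S$ commuting with $s$. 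This centralizer structure picks out a preferred $W$-conjugacy class for each element of $S$.

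With simple reflections identified up to conjugation, I would build a bijection $\sigma\colon S\to S'$ by matching these $W$-orbits, and observe that the relation $st=ts$ is purely group-theoretic and is therefore preserved by $\sigma$. Thus $\sigma$ induces an isomorphism of commutation graphs, and the assignment $s\mapsto\sigma(s)$ extends to an automorphism $\rho$ of $W$ because both $S$ and $S'$ present $W$ via the same right-angled relations once their graphs are identified. The main obstacle is the characterization step — distinguishing ``simple'' reflections from other conjugates (for example $sts$ when $s,t\in S$ do not commute) inside the full reflection set $T$. Since a general Coxeter group need not be rigid, the argument must genuinely exploit the hypothesis $m_{st}\in\{2,\infty\}$, for instance by invoking the CAT(0) cube complex structure of the Davis complex, where walls and their stabilizers are sufficiently well-behaved for the centralizer analysis to cleanly separate simple from non-simple reflections.
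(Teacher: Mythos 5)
This theorem is quoted from Radcliffe~\cite{Radc03}; the paper supplies no proof of it, so there is no in-paper argument to compare yours against.

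Judged on its own, your sketch points at a plausible strategy but is not a proof, and you acknowledge as much yourself by calling the key step ``the main obstacle.'' Two genuine gaps stand out. First, rigidity quantifies over an arbitrary second Coxeter system $(W,S',m')$ for the same abstract group; only $(W,S,m)$ is assumed right-angled, yet you restrict from the outset to the case where both systems are right-angled. The restriction is in fact harmless, but it must be justified: every finite standard parabolic subgroup of a right-angled system is an elementary abelian $2$-group, so every finite-order element of $W$ has order at most $2$, whereas $m'_{st}\ge 3$ would make $s't'$ an element of order at least $3$; hence $m'_{st}\in\{2,\infty\}$ as well. Only once that is in place is your abelianization count $|S|=|S'|$ valid, since for a non-right-angled system the abelianization collapses generators joined by odd labels.

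Second, and more seriously, you never actually supply the intrinsic characterization of the reflection set $T$, nor the criterion that isolates the conjugacy classes of simple reflections inside $T$, nor the argument that the two systems yield the same reflection set $T=T'$. These are precisely where the substance of a rigidity proof lies, and none of them is routine. Centralizer structure sees only conjugacy classes, so even after matching classes one must still show that the commutation graph can be reconstructed class-by-class; this is not automatic, because conjugating a simple reflection by another generator generally destroys the commutation relations it enjoyed with the rest of $S$, so an arbitrary choice of class representatives need not reproduce the graph. Appealing to the CAT(0) cube complex of the Davis complex is a reasonable hint about where such an argument might live, but it does not fill the gap. For the record, Radcliffe's cited paper treats rigidity of graph products of groups by algebraic and combinatorial means and does not proceed by the geometric wall/centralizer route you outline.
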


  Associated to any Coxeter system $(W, S, m)$, there is a symmetric bilinear
 form $(\, ,\, )$ on a real vector space $V$ with a basis $\{\alpha_s\, | \, s \in S\}$ in
 one-to-one correspondence with the elements of $S$. The bilinear form $(\, ,\, )$
 is defined by:
   \begin{equation} \label{Equ:Bilinear-Form}
    (\alpha_s, \alpha_t) := - \cos \frac{\pi}{m_{st}},
   \end{equation}
  where the value on the right-hand side is interpreted to be $-1$ when $m_{st} =
   \infty$.\nn

  It is well-known that a Coxeter group $W$ is finite if and only if
   the bilinear form of a Coxeter system $(W, S, m)$ is positive definite.
   All finite Coxeter groups have been classified by H.~S.~M.~Coxeter in 1930s
   (see~\cite{Coxeter35} and~\cite{GroBen85}).
  It is easy to see that if $W$ is a finite
   right-angled Coxeter group, $W$ must be isomorphic to
   $(\Z_2)^k$ for some $k\geq 0$. By Theorem~\ref{thm:Rigid}, any
    Coxeter graph of $W\cong (\Z_2)^k$ should be $k$ disjoint vertices. \nn

       An irreducible Coxeter group $W$ is called \emph{affine} if there is a
   Coxeter system $(W, S, m)$ so that the bilinear form of the system (see~\eqref{Equ:Bilinear-Form})
    is positive semi-definite but not positive definite.
   More generally, a Coxeter group is called \emph{affine} if its irreducible components are
   either finite or affine, and at least one component is affine.
   Equivalently, a Coxeter group is affine if it is an infinite group and
   has a representation as a discrete, properly acting reflection
   group in $\R^n$. The reader is referred to~\cite{Humph90}
    for more information on affine Coxeter groups.
    A Coxeter group is called \emph{non-affine} if it is not
   affine.\nn

    \begin{thm}[Qi~\cite{Qi07}] \label{thm:Qi}
    The center of any finite index subgroup of an infinite, irreducible,
    non-affine Coxeter group is trivial.
  \end{thm}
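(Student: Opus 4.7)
The plan is to reduce the statement to the triviality of the FC-center of $W$---the set of elements with finite $W$-conjugacy class---and then to invoke CAT(0) geometry on the Davis complex. Let $H$ be a finite index subgroup of $W$ and let $g\in Z(H)$. Since $H$ centralizes $g$, the centralizer $C_W(g)$ contains $H$, so $[W:C_W(g)]\leq[W:H]<\infty$ and the $W$-conjugacy class of $g$ has at most $[W:H]$ elements. Hence it suffices to show that the FC-center of any infinite, irreducible, non-affine Coxeter group is trivial.

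For the second step, I would invoke Moussong's theorem that the Davis complex $\Sigma$ admits a piecewise-Euclidean CAT(0) metric on which $W$ acts properly, cocompactly, isometrically, and faithfully. If $g\in W$ has finite $W$-conjugacy class, then $g$ commutes with some finite index subgroup $W_0\leq W$, and the Bridson--Haefliger splitting theorem for CAT(0) groups yields a $W_0$-invariant isometric splitting $\Sigma\cong\Sigma'\times\R^k$ along whose $\R^k$-factor $g$ acts as a nontrivial translation (assuming $g\neq 1$). A cocompactness and averaging argument, using that $W_0$ has finite index in $W$, promotes this splitting to one invariant under all of $W$.

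The main obstacle is the concluding geometric step: to show that a nontrivial $W$-equivariant Euclidean splitting factor of $\Sigma$ forces $W$ to contain a finite or affine irreducible parabolic component. This is precisely where the hypothesis that $W$ is irreducible and non-affine must enter, through the classification of Coxeter systems whose canonical bilinear form \eqref{Equ:Bilinear-Form} is positive semidefinite. The key point is that the stabilizer in $W$ of a $W$-equivariant flat factor is a parabolic subgroup whose associated bilinear form must itself be positive semidefinite, hence either finite or affine in its irreducible components; by Theorem~\ref{thm:Rigid}-type rigidity and irreducibility of $(W,S,m)$ no such proper parabolic exists. Once this identification is in place, the hypotheses force $k=0$, so $g$ fixes $\Sigma$ pointwise, and faithfulness of the action gives $g=1$.
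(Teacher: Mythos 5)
The paper does not prove this statement---it is quoted verbatim from Qi~\cite{Qi07}---so there is no internal proof to compare against, and I can only assess your attempt on its own terms. Your opening reduction is sound: if $g\in Z(H)$ and $[W:H]<\infty$, then $C_W(g)\supseteq H$ has finite index, so it suffices to show the FC-center of $W$ is trivial. Moussong's theorem realizing $W$ geometrically on the CAT(0) Davis complex $\Sigma$ is also a reasonable framework and is likely related to what Qi actually uses.

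The concluding geometric step, however, has two genuine gaps. First, you tacitly assume the FC-central element $g$ has infinite order (``acts as a nontrivial translation''). If $g\neq 1$ has finite order it is elliptic on $\Sigma$, and the Flat Torus/Splitting machinery produces no Euclidean factor from it; this case needs a separate argument, for instance analysing the proper, closed, convex, $C_W(g)$-invariant set $\mathrm{Fix}(g)$ together with cocompactness. Second, the crux---that a nontrivial $W$-equivariant Euclidean de Rham factor of $\Sigma$ forces $(W,S)$ to have a finite or affine irreducible component---is asserted rather than proved, and the tools you cite do not reach it. The stabilizer in $W$ of the flat factor is all of $W$ (the splitting is $W$-equivariant), so ``the stabilizer is a parabolic subgroup whose Tits form is positive semidefinite'' does not parse as written; what would be needed is control over the kernel or image of the induced homomorphism $W\to\mathrm{Isom}(\R^k)$, and identifying that with a standard parabolic is nontrivial. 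Moreover, Theorem~\ref{thm:Rigid} (Radcliffe's rigidity, about uniqueness of right-angled Coxeter generating sets up to automorphism) is simply the wrong lemma: it says nothing about de Rham factors of $\Sigma$ or about parabolic subgroup structure. The implication you need---that for irreducible $(W,S)$ the Davis complex $\Sigma_W$ has a nontrivial Euclidean factor only when $W$ is finite or affine---is a substantial theorem in its own right (essentially due to Moussong and Davis) and must be cited or established separately, not inferred from rigidity of Coxeter systems.
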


 A group is called \emph{virtually nilpotent} (\emph{abelian, solvable})  if it has
  a nilpotent (abelian, solvable) subgroup of finite index. \nn

  \begin{lem} \label{Lem:Vir-Nilp}
    If the
    right-angled Coxeter group $W_{P}$ of a simple convex polytope $P$ is virtually nilpotent,
    then $W_{P} \cong (\Z_2)^k \times (\widetilde{A}_1)^l$ for
    some $k,l\geq 0$,
   where $k+2l$ equals the number of facets of
   $P$ and $\widetilde{A}_1 = \langle a, b\, |\, a^2=1, b^2=1
   \rangle$.
  \end{lem}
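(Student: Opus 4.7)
The plan is to decompose $W_P$ according to the connected components of its Coxeter graph, rule out infinite non-affine pieces using Qi's theorem, and then identify the surviving irreducible pieces.

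First I would write $W_P=W_1\times\cdots\times W_m$, where each $W_i$ is the right-angled Coxeter group associated to a connected component of the Coxeter graph of $W_P$ (so each $W_i$ is itself an irreducible right-angled Coxeter group on some subset of the generators). Let $H\le W_P$ be a nilpotent subgroup of finite index. Viewing each $W_i$ as a direct factor of $W_P$, the intersection $H_i:=H\cap W_i$ is a subgroup of $W_i$ of index at most $[W_P:H]<\infty$, and it is nilpotent since it is a subgroup of the nilpotent group $H$. Thus each irreducible factor $W_i$ is itself virtually nilpotent.

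Next I would rule out the case that some $W_i$ is infinite, irreducible and non-affine. By Theorem~\ref{thm:Qi}, every finite-index subgroup of such a $W_i$ has trivial center; in particular $H_i$ has trivial center. But $H_i$ is nilpotent and has finite index in the infinite group $W_i$, so $H_i$ is itself infinite and nontrivial, forcing the center of $H_i$ to be nontrivial (every nontrivial nilpotent group has nontrivial center). This contradiction shows that each $W_i$ is either finite or affine.

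Finally I would identify the possible irreducible right-angled pieces. Since $W_i$ is right-angled, its Coxeter graph has labels only in $\{2,\infty\}$, so the Coxeter graph (with edges exactly where $m_{st}=\infty$) is connected. Any finite irreducible right-angled Coxeter group is isomorphic to $\Z_2$ (the excerpt already notes that finite right-angled Coxeter groups are $(\Z_2)^k$ with totally disconnected Coxeter graphs, so the connected case forces $k=1$). For the affine case one checks via the bilinear form \eqref{Equ:Bilinear-Form} that the only irreducible right-angled Coxeter graph yielding a positive semi-definite, non-definite form is the two-vertex graph with one edge labeled $\infty$, namely $\widetilde{A}_1=\langle a,b\mid a^2=b^2=1\rangle$ (any larger connected right-angled diagram with only $\infty$-labels has an indefinite Gram matrix, and forms with some $2$-labels are direct sums). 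Hence each $W_i$ is $\Z_2$ or $\widetilde{A}_1$, and collecting factors gives $W_P\cong(\Z_2)^k\times(\widetilde{A}_1)^l$. Counting generators (one for $\Z_2$, two for $\widetilde{A}_1$) shows that $k+2l$ equals the number of facets of $P$.

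The main obstacle is the step ruling out infinite non-affine components: it requires combining Qi's theorem on centers of finite-index subgroups with the basic fact that nontrivial nilpotent groups have nontrivial center, and recognizing that this applies to the factor $H\cap W_i$ rather than to $H$ itself. The classification of right-angled irreducible affine Coxeter groups as $\widetilde{A}_1$ is essentially a small computation with the Gram matrix, but it must be verified and not merely asserted.
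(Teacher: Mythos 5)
Your proposal is correct and follows essentially the same route as the paper: decompose $W_P$ into irreducible (connected Coxeter graph) direct factors, intersect the finite-index nilpotent subgroup with each factor, invoke Qi's theorem to rule out infinite non-affine factors via the nontrivial-center property of nilpotent groups, and then read off the classification of irreducible right-angled finite/affine Coxeter groups as $\Z_2$ and $\widetilde{A}_1$. The only cosmetic difference is that the paper argues by picking a single offending infinite, irreducible, non-affine component and deriving a contradiction, rather than first establishing virtual nilpotence of every factor $W_i$; both versions use the same key facts and are equivalent in substance.
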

  \begin{proof}
    Suppose $W_{P}$ is infinite and let $N$ be a finite index nilpotent subgroup of $W_{P}$.
   If $W_{P}$ is not affine, then $W_{P}$
    has at least one irreducible component which is neither finite nor affine, say
    $W_1$. Then $N_1 = N\cap W_1$ is a finite index nilpotent
    subgroup of $W_1$. Since $W_1$ is an infinite group, $N_1$ is also infinite hence nontrivial.
    Now since $W_1$ is infinite, irreducible and non-affine, by Theorem~\ref{thm:Qi},
     the center of $N_1$ must be trivial.
    But the center of any nontrivial nilpotent group is never trivial.
    This implies that the Coxeter group $W_{P}$ must be either finite or affine.\nn

    Since $W_{P}$ is right-angled, so is each of its irreducible components.
   Then by the classification of irreducible affine
   Coxeter group (see~\cite{Humph90}),
   each connected component of the Coxeter graph of $W_{P}$ is either a single vertex or
   a $1$-simplex labeled by $\infty$.
   The Coxeter group corresponding to a single vertex is $\Z_2$, and the Coxeter group corresponding to
   a $1$-simplex labeled by $\infty$ is
    $\widetilde{A}_1$.
     Suppose the Coxeter graph of $W_{P}$ consists of
     $k$ isolated vertices and $l$ isolated $1$-simplices
    labeled by $\infty$.  Then $W_{P} \cong (\Z_2)^k \times (\widetilde{A}_1)^l$
      and $k+2l$ equals the number of facets of $P$.
     \end{proof}

   \begin{cor} \label{Cor:virtually-nilpotent}
  If $\pi_1(\R\mathcal{Z}_P)$ is virtually nilpotent, then
  $\pi_1(\R\mathcal{Z}_P)\cong\Z^l$ for some $l\le r/2$ where $r$ is
 the number of facets of $P$. In particular, if
 $\pi_1(\R\mathcal{Z}_P)$ is finite, $\R\mathcal{Z}_P$ must be
 simply connected and so $P$ is $2$-neighborly.
\end{cor}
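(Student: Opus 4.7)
The plan is to reduce the corollary to Lemma~\ref{Lem:Vir-Nilp} together with a direct computation of the commutator subgroup of $W_P$, drawing on both uses of the exact sequence \eqref{Equ:Seq-2}.

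First I would pass from $\pi_1(\R\mathcal{Z}_P)$ up to $W_P$: by \eqref{Equ:Seq-2}, $\pi_1(\R\mathcal{Z}_P)$ sits in $W_P$ as a subgroup of finite index $2^r$, and virtual nilpotency is preserved in both directions across a finite-index inclusion (intersect a nilpotent finite-index subgroup with $W_P$, respectively with $\pi_1(\R\mathcal{Z}_P)$). Hence the hypothesis forces $W_P$ to be virtually nilpotent, and Lemma~\ref{Lem:Vir-Nilp} pins down $W_P \cong (\Z_2)^k \times (\widetilde{A}_1)^l$ with $k+2l = r$.

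Next, using the other half of \eqref{Equ:Seq-2}, namely $\pi_1(\R\mathcal{Z}_P) = [W_P, W_P]$, I would split the commutator across the direct factors. The $(\Z_2)^k$ factor contributes nothing, while each infinite dihedral factor $\widetilde{A}_1 = \langle a, b \mid a^2 = b^2 = 1\rangle$ has abelianization $(\Z_2)^2$, so its commutator subgroup has index $4$ and is the infinite cyclic group generated by $(ab)^2$. Therefore $\pi_1(\R\mathcal{Z}_P) \cong \Z^l$, and the bound $l \leq r/2$ is immediate from $k + 2l = r$. For the ``in particular'' clause, a finite group is trivially virtually nilpotent, so the same argument gives $\pi_1(\R\mathcal{Z}_P) \cong \Z^l$; finiteness forces $l = 0$, whence $\R\mathcal{Z}_P$ is simply connected and $P$ is $2$-neighborly by Proposition~\ref{Prop:2-neighborly}.

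I do not anticipate a real obstacle: Lemma~\ref{Lem:Vir-Nilp} already carries essentially all the weight, and the only bookkeeping point is verifying that virtual nilpotency transfers across the finite-index inclusion $\pi_1(\R\mathcal{Z}_P) \hookrightarrow W_P$, which is standard. The commutator computation for $\widetilde{A}_1$ is elementary and poses no difficulty.
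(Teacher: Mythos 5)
Your argument is correct and follows the same route as the paper's own proof: pass virtual nilpotency from $\pi_1(\R\mathcal{Z}_P)$ to $W_P$ via the finite-index inclusion from \eqref{Equ:Seq-2}, invoke Lemma~\ref{Lem:Vir-Nilp} to get $W_P \cong (\Z_2)^k \times (\widetilde{A}_1)^l$, compute $[W_P,W_P] \cong \Z^l$ factor by factor, and read off the finite case. The only difference is that you spell out the transfer of virtual nilpotency across a finite-index inclusion and the index-4 commutator computation in $\widetilde{A}_1$, which the paper leaves implicit.
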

 \begin{proof}
     If $\pi_1(\R \mathcal{Z}_{P})$ is virtually nilpotent, so is the Coxeter group $W_{P}$.
      Then by Lemma~\ref{Lem:Vir-Nilp}, $W_{P} \cong (\Z_2)^k \times (\widetilde{A}_1)^l$
   for some $k,l\geq 0$ with $k+2l=r$. So $\pi_1(\R \mathcal{Z}_{P}) =[W_{P}, W_{P}] \cong \Z^l$ since
   $[\widetilde{A}_1, \widetilde{A}_1] \cong \Z$. If $\pi_1(\R\mathcal{Z}_P)$ is
   finite, then $l=0$, i.e. $\pi_1(\R\mathcal{Z}_P)$ is trivial.
  And so $P$ is $2$-neighborly by Proposition~\ref{Prop:2-neighborly}.
 \end{proof}

  \begin{cor} \label{Cor:Finite-Fund-Group}
   If a small cover $M^n$ over an $n$-dimensional simple convex
   polytope $P^n$ has finite fundamental group, then $P^n$ is
   $2$-neighborly and $\pi_1(M^n)$ is isomorphic to $(\Z_2)^{r-n}$, where $r$
    is the number of facets of $P^n$.
  \end{cor}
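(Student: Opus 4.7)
The plan is to combine the exact sequence \eqref{Equ:moment-small} with Corollary~\ref{Cor:virtually-nilpotent} and Proposition~\ref{Prop:2-neighborly}. Concretely, I would first exploit \eqref{Equ:moment-small}, which exhibits $\pi_1(\R\mathcal{Z}_{P^n})$ as a normal subgroup of $\pi_1(M^n)$ with finite quotient $(\Z_2)^{r-n}$. Hence if $\pi_1(M^n)$ is finite, so is $\pi_1(\R\mathcal{Z}_{P^n})$.

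Next I would apply Corollary~\ref{Cor:virtually-nilpotent}: a finite group is certainly virtually nilpotent, so the corollary forces $\pi_1(\R\mathcal{Z}_{P^n})$ to be trivial, i.e.\ $\R\mathcal{Z}_{P^n}$ is simply connected. Equivalently (again by that corollary), $P^n$ is $2$-neighborly.

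Finally, I would invoke the implication (i)$\Rightarrow$(iii) of Proposition~\ref{Prop:2-neighborly}, which directly yields $\pi_1(M^n)\cong(\Z_2)^{r-n}$. This completes both assertions of the corollary.

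Since every step is just a one-line application of results already established in the excerpt, there is essentially no obstacle; the only thing to be careful about is pointing out that finiteness passes to the subgroup $\pi_1(\R\mathcal{Z}_{P^n})$ via \eqref{Equ:moment-small} before Corollary~\ref{Cor:virtually-nilpotent} can be invoked.
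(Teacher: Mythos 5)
Your proof is correct and follows the same route as the paper: reduce finiteness of $\pi_1(M^n)$ to finiteness of $\pi_1(\R\mathcal{Z}_{P^n})$ via \eqref{Equ:moment-small}, apply Corollary~\ref{Cor:virtually-nilpotent} to force simple connectivity (hence $2$-neighborliness), and then close with Proposition~\ref{Prop:2-neighborly}(i)$\Rightarrow$(iii). The paper's version is just stated more compactly.
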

  \begin{proof}
   Because of~\eqref{Equ:moment-small}, $\pi_1(M^n)$ is finite if and only if
    $\pi_1(\R \mathcal{Z}_{P^n})$ is finite. So the claim follows from
     Corollary~\ref{Cor:virtually-nilpotent} and
     Proposition~\ref{Prop:2-neighborly}.
  \end{proof}
  \n

    \begin{exam} \label{Exam:Bott}
    Let $\Delta^j$ denote a $j$-simplex.
     If $P^n$ is a product of simplices $\Delta^{n_1}\times \cdots \times \Delta^{n_m}$,
     then the number of facets of $P^n$ is $n+m$ and $P^n$ is $2$-neighborly
     if and only if $n_i \geq 2$ for all $1\leq i \leq m$.
     The real moment-angle manifold $\R\mathcal{Z}_{P^n}$ is
     a product of spheres $S^{n_1}\times \cdots \times S^{n_m}$,
     so $\pi_1(\R\mathcal{Z}_{P^n})\cong \Z^l$ where $l$ is the number of $i$'s with $n_i=1$.  \n

         Small covers over $\Delta^{n_1}\times \cdots \times \Delta^{n_m}$ naturally arise as follows.
            Recall that a \emph{generalized real Bott manifold} is
          the total space $B_m$ of an iterated fiber bundle:
   \begin{equation} \label{Equ:Bm}
      B_m \overset{\pi_m}{\longrightarrow} B_{m-1} \overset{\pi_{m-1}}{\longrightarrow}
    \cdots \overset{\pi_2}{\longrightarrow} B_1 \overset{\pi_1}{\longrightarrow} B_0
   =\ \{ \text{a point} \} ,
   \end{equation}
   where each $B_i$ ($1\leq i \leq m$) is the projectivization of
   the Whitney sum of a finite collection (at least two) of real line bundles over $B_{i-1}$.
   We call the sequence in~\eqref{Equ:Bm} a \emph{generalized real Bott tower}.
   We consider $B_m$ as a closed smooth manifold whose smooth
        structure is determined by the bundle structures of $\pi_i: B_i \rightarrow
        B_{i-1}$, $i=1,\cdots, m$.
   Suppose the fiber of $\pi_i: B_i \rightarrow B_{i-1}$ is a real projective
   space of dimension $n_i$.
   Then it is easy to show that $B_m$ is a small cover over a product of simplices
   $\Delta^{n_1}\times \cdots \times\Delta^{n_m}$.
   Conversely,~\cite[Remark 6.5]{SuyMasDong10} tells us that any small cover
   over a product of simplices is homeomorphic to a generalized real Bott manifold.
    When $n_1=\cdots = n_m =1$, we call $B_m$ a \emph{real Bott manifold}
   (see~\cite{MasKam09-1}), which is a small cover over an $m$-cube.
       \end{exam}

\begin{rem}
    The number of (weakly) equivariant homeomorphism types of real Bott manifolds was
   counted in~\cite{CaiChenLu07} and~\cite{Choi08}. Moreover, it was shown in~\cite{MasKam09-1}
   and~\cite{SuMasOum10} that
   the diffeomorphism types of real Bott manifolds are completely determined by their cohomology rings
   with $\Z_2$-coefficients.
   This is called \emph{cohomological rigidity} of real Bott manifold.
   This property relates the diffeomorphism classification
  of real Bott manifolds with the classification of acyclic digraphs
 (directed graphs with no directed cycles) up to some equivalence
 (see~\cite{SuMasOum10}). But cohomological rigidity does not hold for
 generalized real Bott manifolds. Indeed,
 it is shown in~\cite{Mas-height-2} that there
    exist two generalized real Bott manifolds whose cohomology rings with $\Z_2$-coefficients
    are isomorphic, but they are not even homotopy equivalent.
\end{rem}

    \begin{rem}
      There are many $2$-neighborly simple convex polytopes which are not product of
      simplices. For example, the dual $P^n$ of an
       $n$-dimensional cyclic polytope ($n\geq 4$) is always $2$-neighborly.
       Recall that a \emph{cyclic polytope} $C(k,n)$ ($k > n$) is the convex hull of $k$
       distinct points on the curve $\gamma(t) =(t, t^2, \cdots, t^n)$ in $\R^n$.
        Then the number of facets of its dual $P^n$ has $k$ facets.
        But when $k\geq 2^n$, $P^n$
        admits no small cover hence can not be a product of simplices (see p.428 of~\cite{DaJan91}).
         It is an interesting problem to
      find out all the $2$-neighborly simple convex polytopes that admit small covers.
      Note that this problem is intimately related to the \emph{Buchstaber invariant} of simple polytopes
      (see~\cite[section 7.5]{BP02} and~\cite{Erokh08}).\\
   \end{rem}

   \section{Small cover, real moment-angle manifold and infra-solvmanifold}

It is shown in~\cite{MasKam09-1} that any $n$-dimensional real Bott
manifold
   admits a flat Riemannian metric which is invariant under the $(\Z_2)^n$-action.
Conversely, any small cover of dimension $n$
    which admits a flat Riemannian metric invariant under the
    canonical $(\Z_2)^n$-action must be a real Bott manifold.
    In fact, this is proved for any real toric manifolds in~\cite[Theorem 1.2]{MasKam09-1},
     but the same argument works for small covers.
    This suggests us to ask the following
   question. \nn

   \noindent \textbf{Question:}
    If a small cover $M^n$ of dimension $n$ admits a flat Riemannian metric (not necessarily invariant
     under the $(\Z_2)^n$-action), must $M^n$ be diffeomorphic to a real Bott
     manifold? Or equivalently, must $M^n$ be a small cover over an $n$-cube?
   \nn

   We will see that the answer to this question is yes (Corollary~\ref{Cor:Infra-Solv-2}).
    In fact, we
  will obtain a much stronger result in Corollary~\ref{Cor:Infra-Solv-2}.
   But first let us introduce a well-known notion in combinatorics.\nn

 \begin{defi}[Flag Complex]
   A simplicial complex $K$ is called \emph{flag} if a subset $J$ of the vertex set of $K$
   spans a simplex in $K$ whenever any two vertices in $J$ are joined by a $1$-simplex in $K$.
 \end{defi}
  Let $P$ be a simple convex polytope of dimension $n$.  For simplicity, we
 say that $P$ is \emph{flag} if the boundary of $P$ is dual to a flag
 complex (i.e. a collection of facets of $P$ have a common intersection
  whenever any two of them intersect). Suppose $ F_1,\cdots, F_r$ are all the
  facets of $P$.
  Then each $F_k$ itself is an $(n-1)$-dimensional simple convex polytope whose facets
   are $\{ F_{ki} := F_k\cap F_i \neq \emptyset, 1\leq i \leq r \}$.
  Let $s(F_k)$ denote the generator of $W_P$ corresponding to the facet
  $F_k$ ($1\leq k \leq r$) of $P$. Similarly, let $s(F_{ki})$ denote the
  the generator of $W_{F_k}$ corresponding to a facet $F_{ki}$ of $F_k$.

   \begin{lem} \label{Lem:Face-Flag}
     Suppose a simple convex polytope $P$ is flag and
     $ F_1,\cdots, F_r$ are all the facets of $P$. If $F_k\cap F_i\not=\emptyset$ for
   $i=i_1,\dots,i_q$, then $F_{ki_1}\cap\dots\cap F_{ki_q}\not=\emptyset$
    if and only if $F_{i_1}\cap\dots\cap F_{i_q}\not=\emptyset$.
    So if $P$ is flag, every facet of $P$ must also be flag.
 \end{lem}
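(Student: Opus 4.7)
My plan is to reduce the entire lemma to a single set-theoretic identity combined with one application of the flag hypothesis for $P$. The identity is simply
\[
F_{k i_1}\cap \cdots \cap F_{k i_q}=(F_k\cap F_{i_1})\cap\cdots\cap(F_k\cap F_{i_q})=F_k\cap F_{i_1}\cap\cdots\cap F_{i_q},
\]
which I would write down first. One implication of the equivalence, namely that $F_{ki_1}\cap\cdots\cap F_{ki_q}\ne\emptyset$ forces $F_{i_1}\cap\cdots\cap F_{i_q}\ne\emptyset$, is then immediate from this identity.

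For the other direction, assume $F_{i_1}\cap\cdots\cap F_{i_q}\ne\emptyset$ and consider the collection of $q+1$ facets $\{F_k,F_{i_1},\dots,F_{i_q}\}$ of $P$. I would check pairwise intersection twice: the hypothesis $F_k\cap F_{i_j}\ne\emptyset$ handles the cross pairs, while $F_{i_a}\cap F_{i_b}\supseteq F_{i_1}\cap\cdots\cap F_{i_q}\ne\emptyset$ handles the intra-collection pairs. Applying the flagness of $P$ to this pairwise-intersecting family yields $F_k\cap F_{i_1}\cap\cdots\cap F_{i_q}\ne\emptyset$, which by the identity is exactly what is needed. This finishes the equivalence.

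To deduce the second assertion, I would unpack the definition of flag polytope for $F_k$: let $F_{ki_1},\dots,F_{ki_q}$ be any pairwise-intersecting collection of facets of $F_k$. Nonemptiness of $F_{ki_a}\cap F_{ki_b}=F_k\cap F_{i_a}\cap F_{i_b}$ gives both $F_{i_a}\cap F_{i_b}\ne\emptyset$ and $F_k\cap F_{i_j}\ne\emptyset$ for each $j$, so once again the family $\{F_k,F_{i_1},\dots,F_{i_q}\}$ is pairwise intersecting in $P$. The flagness of $P$ then forces $F_k\cap F_{i_1}\cap\cdots\cap F_{i_q}\ne\emptyset$, and by the identity this is $F_{ki_1}\cap\cdots\cap F_{ki_q}\ne\emptyset$, confirming the flag property for $F_k$.

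There is essentially no hard step here; the only thing worth being explicit about is the bookkeeping that flagness of $P$ must be applied to the enlarged family $\{F_k,F_{i_1},\dots,F_{i_q}\}$, so that both the cross pairs involving $F_k$ and the pairs from the original collection are verified to intersect before invoking the hypothesis.
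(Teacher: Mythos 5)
Your proof is correct and takes essentially the same route as the paper: both reduce the ``if'' direction to the elementary identity $F_{ki_1}\cap\cdots\cap F_{ki_q}=F_k\cap F_{i_1}\cap\cdots\cap F_{i_q}$ and then apply the flagness of $P$ to the pairwise-intersecting enlarged family $\{F_k,F_{i_1},\dots,F_{i_q}\}$. Your write-up is slightly more explicit about the bookkeeping and about deducing the final assertion that each facet is flag, but the underlying argument is identical.
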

 \begin{proof}
   The ``only if part'' is trivial.  Suppose $F_{i_1}\cap\dots\cap
    F_{i_q}\not=\emptyset$.  Then any two of $F_k, F_{i_1},\dots,
   F_{i_q}$ intersect, so the whole intersection $F_k \cap
   F_{i_1}\cap\dots\cap F_{i_q}$ is non-empty because
  $P$ is flag, proving the ``if part''.
\end{proof}

       \nn

  \begin{prop} \label{prop:flag-polytop-solv}
   Let $P^n$ be a simple convex polytope of dimension $n$.
   If $P^n$ is flag and the
    fundamental group of the real moment-angle manifold $\R \mathcal{Z}_{P^n}$ of $P^n$ is
    virtually solvable, then $P^n$ must be an $n$-cube.
 \end{prop}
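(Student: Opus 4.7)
The plan is to combine three ingredients: an upgrade of virtual solvability of $\pi_1(\R\mathcal{Z}_{P^n})$ to a rigid structural statement on $W_{P^n}$, the Davis--Januszkiewicz--Scott asphericality theorem arising from flagness of $P^n$, and a short combinatorial step forcing $P^n$ to be an $n$-cube.

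Since \eqref{Equ:Seq-2} exhibits $\pi_1(\R\mathcal{Z}_{P^n})$ as a finite-index subgroup of $W_{P^n}$, virtual solvability transfers to $W_{P^n}$. The key step is to upgrade ``virtually solvable'' to ``virtually nilpotent'', which is where the center-based argument of Lemma~\ref{Lem:Vir-Nilp} breaks down, since solvable groups need not have non-trivial centers. I would replace it with the known fact that every infinite, irreducible, non-affine Coxeter group contains a non-abelian free subgroup $F_2$ (a special case of the strong Tits alternative for Coxeter groups due to Noskov--Vinberg). Because $F_2$ is not virtually solvable, the same line of reasoning as in Lemma~\ref{Lem:Vir-Nilp} shows that every irreducible component of $W_{P^n}$ is finite or affine. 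As $W_{P^n}$ is right-angled, this forces $W_{P^n}\cong (\Z_2)^k \times (\widetilde{A}_1)^l$ with $k+2l=r$, and Corollary~\ref{Cor:virtually-nilpotent} gives $\pi_1(\R\mathcal{Z}_{P^n})\cong \Z^l$.

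Next, because $P^n$ is flag, the theorem of Davis--Januszkiewicz--Scott~\cite{DavJanScott98} says that $\R\mathcal{Z}_{P^n}$ is aspherical. Combined with $\pi_1 \cong \Z^l$, this identifies $\R\mathcal{Z}_{P^n}$ with a $K(\Z^l,1)$, so $\R\mathcal{Z}_{P^n}$ is homotopy equivalent to the torus $T^l$. Since $\R\mathcal{Z}_{P^n}$ is a closed $n$-manifold and $T^l$ has cohomological dimension $l$, this forces $l = n$.

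Finally, the combinatorial step: the decomposition $W_{P^n}\cong (\Z_2)^k \times (\widetilde{A}_1)^n$ translates into the following adjacency structure on the facets of $P^n$: $k$ facets each adjacent to every other facet, together with $n$ pairs of disjoint facets such that any two facets from different pairs intersect. Flagness of $P^n$ says that $\partial (P^n)^*$ is the flag complex of this adjacency graph, and this flag complex equals the join
\[
    \Delta^{k-1} * (S^0)^{*n} \ \cong\ D^{k-1}*S^{n-1}\ \cong\ D^{k+n-1}.
\]
Since $\partial (P^n)^*$ is a triangulation of $S^{n-1}$ and a closed disk is never a sphere, we must have $k=0$; then $\partial(P^n)^*=(S^0)^{*n}$ is the boundary complex of the $n$-dimensional cross-polytope. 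Because a flag simple polytope is determined by the adjacency graph of its facets (equivalently, the $1$-skeleton of its dual), we conclude $P^n \cong I^n$ combinatorially, and Example~\ref{Exam:Bott} then yields $\R\mathcal{Z}_{P^n}\cong T^n$. The main obstacle is precisely the ``virtually solvable $\Rightarrow$ virtually nilpotent'' step: the center-based argument of Lemma~\ref{Lem:Vir-Nilp} is inadequate and must be replaced by the Tits-alternative input.
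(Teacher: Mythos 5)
Your proof is correct, but it takes a genuinely different route from the paper's. The paper argues by \emph{reduction to dimension two}: using Lemma~\ref{Lem:Face-Flag}, flagness lets one embed $W_{F_k}$ (and hence $\pi_1(\R\mathcal{Z}_{F_k})$) into $W_{P^n}$ (resp.\ $\pi_1(\R\mathcal{Z}_{P^n})$) for each facet $F_k$, so that virtual solvability and flagness pass down to every face; iterating to the $2$-faces and invoking the classification of surface real moment-angle manifolds (only the $4$-gon gives a virtually solvable $\pi_1$) forces every $2$-face to be a $4$-gon, and Ziegler's Problem~0.1 then yields $P^n = I^n$. No Tits alternative and no asphericality is needed along the way. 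Your argument instead works \emph{top-down}: you invoke the strong Tits alternative for Coxeter groups (Noskov--Vinberg) to promote ``virtually solvable'' to ``virtually abelian'' (thereby repairing the center-based argument of Lemma~\ref{Lem:Vir-Nilp}, which you rightly observe does not apply verbatim to solvable groups), conclude $W_{P^n}\cong(\Z_2)^k\times(\widetilde{A}_1)^l$, use Proposition~\ref{Prop:Asph-Flag} (equivalently the Davis--Januszkiewicz--Scott contractibility of the universal cover $\mathcal{M}$) together with the fact that $\R\mathcal{Z}_{P^n}$ is a closed $n$-manifold to force $l=n$, and then identify $\partial(P^n)^*$ as the join $\Delta^{k-1}*(S^0)^{*n}$ to get $k=0$ by a dimension/homeomorphism-type count. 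Each approach has its advantages: the paper's is entirely elementary and self-contained, needing only the $2$-dimensional base case; yours is shorter once the Tits alternative is granted and gives a cleaner structural picture of $W_{P^n}$, and your final combinatorial step is a neat replacement for the inductive argument of Proposition~\ref{flag}. Two small remarks: your argument logically relies on Proposition~\ref{Prop:Asph-Flag}, which the paper states \emph{after} this proposition, but since its proof is independent this is only a matter of reordering; and your reduction from ``virtually solvable'' already gives ``virtually abelian'' via Noskov--Vinberg, so you could cite Lemma~\ref{Lem:Vir-Nilp} directly rather than rerunning its component-by-component argument.
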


\begin{proof}
  When $n=2$, $P^n$ is a polygon. Then our assumption that $P^n$ is flag and
  $\pi_1(\R \mathcal{Z}_{P^n})$ is virtually solvable will force
  $P^n$ to be a $4$-gon (see~\cite[Example 6.40]{BP02}).
  In the rest, we assume $n\ge 3$.
 By~\eqref{Equ:Seq-2}, we have an exact sequence
\begin{equation*}
  1\longrightarrow \pi_1(\R \mathcal{Z}_{P^n})\longrightarrow
  W_{P^n}\longrightarrow (\Z_2)^r \longrightarrow 1.
\end{equation*}
   Let $F_k$ $(k=1,\dots,r)$ be the facets of $P^n$. Then similarly, we have
 an exact sequence for each $F_k$
   \begin{equation*}
   1\longrightarrow \pi_1(\R\mathcal{Z}_{F_k})\longrightarrow
   W_{F_k}\longrightarrow (\Z_2)^{r_k}\longrightarrow 1
\end{equation*}
 where $r_k$ is the number of facets of $F_k$. Notice that $W_{F_k}$ is
 generated by $s(F_{ki})$ with relations $s(F_{ki})^2=1$ and
  $(s(F_{ki})s(F_{kj}))^2=1$ whenever $F_{ki}$ and
  $F_{kj}$ intersect. But $F_{ki}$ and $F_{kj}$ intersect if and
  only if $F_i$ and $F_j$ (having non-empty intersection with $F_k$)
  intersect by Lemma~\ref{Lem:Face-Flag}. This implies that the group homomorphism
 $\varphi : W_{F_k} \rightarrow W_{P^n}$ sending each $s(F_{ki})$ to $s(F_i)$ is
  injective. Moreover, since
   $$\pi_1(\R \mathcal{Z}_{P^n}) = [W_{P^n}, W_{P^n}], \ \ \pi_1(\R\mathcal{Z}_{F_k}) = [W_{F_k},
   W_{F_k}],$$
  $\varphi$ maps $\pi_1(\R\mathcal{Z}_{F_k})$
   injectively into $\pi_1(\R\mathcal{Z}_{P^n})$.
   Then $\pi_1(\R\mathcal{Z}_{F_k})$ is virtually solvable 
   since so is $\pi_1(\R\mathcal{Z}_{P^n})$
   by our assumption.
  In addition, $F_k$ is also flag by Lemma~\ref{Lem:Face-Flag}. \nn

  By iterating the above arguments, we can show that for any
  $2$-face $f$ of $P^n$, $f$ is flag and
  the fundamental group of the real moment-angle manifold $\R \mathcal{Z}_{f}$ is virtually
  solvable. We have shown that such an $f$ must be a $4$-gon.
  So any $2$-face of $P^n$ is a $4$-gon, which implies that $P^n$ is an $n$-cube
   (see~\cite[Problem 0.1]{Ziegler95}).
\end{proof}

    It is shown in~\cite[Theorem 2.2.5]{DavJanScott98}
   that a small cover over a simple convex polytope $P$ is aspherical if and only if $P$ is flag.
   Similarly, we can prove the following.\nn

  \begin{prop} \label{Prop:Asph-Flag}
   The real moment-angle manifold $\R \mathcal{Z}_P$ of a simple convex polytope $P$
   is aspherical if and only if $P$ is flag.
   \end{prop}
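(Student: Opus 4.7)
The plan is to mirror the proof of the small cover version (Theorem 2.2.5 of~\cite{DavJanScott98}) and translate it to $\R\mathcal{Z}_P$. The first step is to identify the universal cover of $\R\mathcal{Z}_P$ with the Davis-type construction
\[
  \mathcal{U}(W_{P}, P) := (W_{P}\times P)/\sim,
\]
where $(w,p)\sim(w',p')$ iff $p=p'$ and $w^{-1}w'$ belongs to the parabolic subgroup of $W_{P}$ generated by the facets containing $p$. The gluing construction used to define $\R\mathcal{Z}_P$ in~\eqref{Equ:Real-Moment-Angle} factors through $\mathcal{U}(W_{P}, P)$, and the deck transformation group of $\mathcal{U}(W_{P}, P)\to \R\mathcal{Z}_P$ is $[W_{P},W_{P}]\cong\pi_1(\R\mathcal{Z}_P)$ by~\eqref{Equ:Seq-2}. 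Hence $\R\mathcal{Z}_P$ is aspherical if and only if $\mathcal{U}(W_{P},P)$ is contractible.

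For the ``if'' direction, assume $P$ is flag. Following the strategy of~\cite{DavJanScott98}, I would equip $\mathcal{U}(W_{P},P)$ with a natural piecewise Euclidean cube complex structure, obtained from the standard cubical subdivision at each vertex of each copy of $P$ (the cubes centered at vertices assemble correctly precisely because $P$ is simple). The link of a vertex in this cube complex is canonically isomorphic to the simplicial complex dual to $\partial P$, so the flagness of $P$ is exactly Gromov's link condition. Thus $\mathcal{U}(W_{P},P)$ is locally $\mathrm{CAT}(0)$; since it is simply connected it is globally $\mathrm{CAT}(0)$, hence contractible.

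For the ``only if'' direction, assume $P$ is not flag and choose a minimal collection of facets $F_{i_1},\dots,F_{i_k}$ ($k\ge 3$) that pairwise intersect but have empty total intersection. In $\mathcal{U}(W_{P},P)$ the chamber $\{e\}\times P$ together with its reflections across the $F_{i_j}$'s and the compositions of these reflections form a subcomplex homeomorphic to $S^{k-1}$; the missing intersection $F_{i_1}\cap\cdots\cap F_{i_k}=\emptyset$ prevents this sphere from bounding a corresponding ``$k$-cube'' of chambers, so it represents a nontrivial class in $\pi_{k-1}(\mathcal{U}(W_{P},P))$, contradicting contractibility. Equivalently, this reproduces an ``empty simplex'' in the nerve and obstructs asphericality by the same mechanism as in the small cover case.

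The main obstacle is setting up the cube-complex structure on $\mathcal{U}(W_{P},P)$ so that the vertex links are exactly the dual complex of $\partial P$; once this is in place, the ``if'' half is a direct application of Gromov's flag criterion and the ``only if'' half amounts to the explicit sphere construction above. An alternative route that avoids the direct CAT(0) analysis is to use~\eqref{Equ:moment-small}: whenever $P$ admits a small cover $M^n$, $\R\mathcal{Z}_P$ is a finite regular cover of $M^n$, so asphericality is equivalent for the two, and the result follows from the small cover version in~\cite{DavJanScott98}; one then removes the small-cover hypothesis by invoking Davis's general theorem on $\mathcal{U}(W,X)$.
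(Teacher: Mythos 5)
Your approach is essentially the same as the paper's: identify the Davis complex $\mathcal{U}(W_P,P)=\mathcal{M}$ as the universal cover of $\R\mathcal{Z}_P$ with deck group $[W_P,W_P]$, then reduce asphericality of $\R\mathcal{Z}_P$ to contractibility of $\mathcal{M}$, which is governed by flagness of $P$. The only difference is that the paper simply cites~\cite[Theorem 2.2.5]{DavJanScott98} for the equivalence ``$\mathcal{M}$ contractible $\Leftrightarrow$ $P$ flag'' (after carefully verifying, via its Claim-1 and Claim-2, that $[W_P,W_P]$ acts freely on $\mathcal{M}$ with quotient $\R\mathcal{Z}_P$), whereas you sketch a re-derivation of that cited theorem via the CAT(0) cube-complex structure and an empty-simplex obstruction, and you assert rather than verify the covering-space identification.
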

   \begin{proof}
    Define $\mathcal{M} =  P \times W_P \slash \sim$, where the
    equivalence relation is defined by $(x,w)\sim (x', w')$ $\Longleftrightarrow$
    $x'=x$ and $w'w^{-1}$ belongs to the subgroup $G_x$ of $W_P$ generated by
    $\{ s(F)\, ;\, x\in F\}$. If $x$ lies in the relative interior of a codimesion-$k$
    face of $P$, then the subgroup $G_x$ of $W_P$ is isomorphic to $(\Z_2)^k$.\nn

    It is shown in~\cite[Lemma 4.4]{DaJan91} that
    $\mathcal{M}$ is simply connected. Let $\zeta:  P \times W_P \rightarrow \mathcal{M} $ be the quotient map.
    There is a natural action of $W_P$ on $\mathcal{M}$ defined by:
    \begin{equation}
       w'\cdot \zeta(x,w) = \zeta(x,w'w), \ \, w,w'\in W_P,\ x\in P.
    \end{equation}
    The isotropy group of a point $\zeta(w,x) \in \mathcal{M}$ under this $W_P$-action is exactly
    $G_x$.\nn

    \textbf{Claim-1:} The commutator subgroup $[W_P, W_P]$
    of $W_P$ acts freely on $\mathcal{M}$.\nn

   It amounts to prove that $[W_P, W_P] \cap G_x = \{ 1 \}$ for any point $x\in P$. In fact,
   it is easy to see that the abelianization $\mathrm{Ab}: W_P \rightarrow W^{ab}_P$ maps
   $G_x$ injectively into $W^{ab}_P$. So $G_x\cap \text{ker}(\mathrm{Ab})=\{ 1 \}$, proving Claim-1. \nn

    \textbf{Claim-2:} The quotient space $\mathcal{M}\slash [W_P, W_P]$ is homeomorphic to
    $\R \mathcal{Z}_P$.\nn

    Suppose $F_1,\cdots, F_r$ are all the facets of $P$. For each $1\leq i\leq
    r$, let $\overline{s}(F_i)$ be the image of $s(F_i)$ under the
    abelianization $\mathrm{Ab}: W_P \rightarrow W^{ab}_P$.
    Then $\{ \overline{s}(F_1),\ldots, \overline{s}(F_r) \}$ is a basis of $W^{ab}_{P} \cong
    (\Z_2)^r$.
     So the quotient $\mathcal{M}\slash [W_P, W_P]$ is homeomorphic to
     the space obtained
     by gluing $2^r$ copies of $P$
     according to the characteristic function $\mu$ on $P$ where $\mu(F_i) = \overline{s}(F_i) \in (\Z_2)^r$,
     $1\leq i \leq r$. This coincides with the definition of
     $\R\mathcal{Z}_P$ (see~\eqref{Equ:Real-Moment-Angle}). So the
     Claim-2 is proved.\nn

     Therefore, $\mathcal{M}$ is a universal covering of
       $\R\mathcal{Z}_P$. The relation between $\mathcal{M}$ and $\R
       \mathcal{Z}_P$ is demonstrated in the following diagram.\n

       \[ \xymatrix{
           [W_P,W_P] \ar@/^3ex/@{->}[r]  & P \times W_P \ar[r]\ar[d] &  P \times (\Z_2)^r \ar[d]\\
        \pi_1(\R \mathcal{Z}_P) \ar@{=}[u] \ar@/^3ex/@{->}[r]^{\text{acts freely}} & P \times W_P\slash \sim \ar[r] \ar@{=}[d] &  P \times (\Z_2)^r\slash \sim
           \ar@{=}[d] \\
                 &     \mathcal{M}   & \R \mathcal{Z}_P
           } \]
           \nn

       So $\R \mathcal{Z}_P$ is aspherical
       if and only if $\mathcal{M}$ is contractible.
      But it is shown in~\cite[Theorem 2.2.5]{DavJanScott98} that
      $\mathcal{M}$ is contractible if and only if $P$ is flag. So
      the proposition is proved.
   \end{proof}
   \n
   The following corollary is an immediate consequence of
     Proposition~\ref{prop:flag-polytop-solv} and
     Proposition~\ref{Prop:Asph-Flag}.\nn

  \begin{cor}\label{Cor:Infra-Solv}
 Let $P^n$ be a simple convex polytope of dimension $n$.
  If the real moment-angle manifold $\R\mathcal{Z}_{P^n}$ (or a small cover) of $P^n$
  is aspherical with virtually solvable fundamental group, then $P^n$ is an $n$-cube.
 \end{cor}

  The typical examples of aspherical manifolds with virtually solvable fundamental groups
  are infra-solvmanifolds.  In fact, any compact aspherical manifold with virtually solvable fundamental
  group is homeomorphic to an infra-solvmanifold (see~\cite[Corollary 2.21]{FarJones90}). But
  in general, we can not replace the ``homeomorphic'' by ``diffeomorphic'' in this statement.
 On the other hand, compact infra-solvmanifolds are \emph{smoothly rigid}, i.e. any two
    compact infra-solvmanifolds with isomorphic fundamental groups
    are diffeomorphic (see~\cite[Theorem 2]{Wilk00} or~\cite[Corollary 1.5]{Bau04}).

   \nn

  \begin{defi}[Infrahomogeneous Space]\label{Def:Infrahomoge}
   Let $G$ be a connected and simply connected Lie group, $K$ be a maximal compact
 subgroup of the group $\mathrm{Aut}(G)$ of automorphisms of $G$, and $\Gamma$ be a
  cocompact, discrete subgroup of $E(G) = G \rtimes K$. If the
  action of $\Gamma$ on $G$ is free and $[\Gamma: G\cap \Gamma ] < \infty$,
  the orbit space $\Gamma \backslash G$ is called a \emph{compact
  infrahomogeneous space modeled on $G$}.
   If $G$ is solvable (nilpotent), $\Gamma \backslash G$ is called a
   \emph{compact infra-solvmanifold} (\emph{infra-nilmanifold}).
   When $G = \R^n$ and $K=O(n,\R)$ (the orthogonal group), $\Gamma \backslash G$
   is a compact \emph{flat Riemannian manifold}.
  \end{defi}

  In the above definition, the group law of $G \rtimes K < G\rtimes \mathrm{Aut}(G)$ is
  defined by:
  \[ (g_1,\tau_1) \cdot (g_2, \tau_2) = (g_1\cdot \tau_1(g_2), \tau_1\circ\tau_2), \ \, g_1,g_2\in G,\
  \tau_1,\tau_2\in \mathrm{Aut}(G). \]
  The action of $G\rtimes \mathrm{Aut}(G)$ on $G$ is defined by:
   \[  (g,\tau) \cdot g' = g \cdot \tau(g'),\ \, g, g'\in G,\ \tau\in \mathrm{Aut}(G).  \]

  By definition, we have the following hierarchy of notions:
  \begin{align*}
   & \ \quad \text{Compact flat Riemannian manifolds}
     \\
 &  \subset \text{Compact infra-nilmanifolds} \subset \text{Compact infra-solvmanifolds}\\
  &  \subset \text{Compact aspherical manifolds
  with virtually solvable fundamental
   groups}
 \end{align*}

 \begin{rem}
  There are at least three different versions of the definition of compact infra-solvmanifolds in
  the mathematical literature. 
  The one used here (Definition~\ref{Def:Infrahomoge}) is taken from~\cite{Tusch97}. The other two are:
   \begin{itemize}
     \item A \emph{compact infra-solvmanifold} is a manifold of the form $\Delta \backslash G$, where $G$ is a connected,
    simply connected solvable Lie group, and $\Delta$ is a torsion-free
  cocompact discrete subgroup of $\mathrm{Aff}(G)=G \rtimes \mathrm{Aut}(G)$
   which satisfies: the closure of $hol(\Delta)$ in $\mathrm{Aut}(G)$ is compact where
   $hol: \mathrm{Aff}(G) \rightarrow \mathrm{Aut}(G)$ is the holonomy projection
    (see~\cite[Definition 1.1]{Bau04}).\nn

     \item A \emph{compact infra-solvmanifold} is a double coset space $\Gamma \backslash G \slash K$
   where $G$ is a virtually connected and virtually solvable Lie
   group, $K$ is a maximal compact subgroup of $G$ and $\Gamma$ is a
   torsion-free, cocompact, discrete subgroup of $G$ (see~\cite[2.10]{FarJones90}).
   \end{itemize}

   These three definitions are actually equivalent (see~\cite{KuroYu-13} for explanation).
 \end{rem}

  Compact infra-nilmanifolds and infra-solvmanifolds also have some
  Riemannian geometric interpretations as follows.
   By a theorem of Ruh~\cite{Ruh82} which is based on the work of
   Gromov~\cite{Gromov78}, a compact connected smooth manifold $M$ is an infra-nilmanifold if and only if
    it is \emph{almost flat}, which means that
    $M$ admits a sequence of
  Riemannian metric $\{ g_{n}\}$ with uniformly bounded sectional curvature
  so that $(M, g_n)$ collapses in the \emph{Gromov-Hausdorff} sense to a
   point. Similarly, it is shown
   in~\cite[Proposition 3.1]{Tusch97} that a compact connected topological
  manifold $M$ is homeomorphic to an infra-solvmanifold if and only if
   $M$ admits a sequence of
  Riemannian metric $\{ g_{n}\}$ with uniformly bounded sectional curvature
  so that $(M, g_n)$ collapses in the \emph{Gromov-Hausdorff} sense to a \emph{flat orbifold}.
  \nn

  Note that any infra-solvmanifold has a canonical smooth structure which is induced from
  the simply connected solvable Lie group. On the other hand,
  a small cover or a real moment-angle manifold 
  may carry non-diffeomorphic smooth structures
  (e.g. the $7$-dimensional sphere). 
  So when we say a small cover or a real moment-angle manifold is an infra-solvmanifold, 
  a particular smooth structure should be chosen a priori. 
  In addition, since any small cover or real moment-angle manifold is equipped with a canonical 
  $\Z_2$-torus action, it is natural to consider \emph{equivariant smooth structures}, i.e.
   smooth structures which make the
  canonical $\Z_2$-torus action smooth.
  In the following, we show that
  equivariant smooth structures on small covers and real moment-angle manifolds
  always exist and 
 are unique up to equivariant diffeomorphisms.\nn
 
 To see the existence of equivariant smooth structures on a real moment-angle manifold 
 $\R \mathcal{Z}_{P^n}$, let us think of $\R \mathcal{Z}_{P^n}$ as 
 the pull-back by an embedding $i$ of $P^n$ into $\R^m_{\geq 0}$ from
 the standard $(\Z_2)^m$-action on $\R^m$, where $m$ is the number of facets of $P^n$
  (see the following commutative diagram).
   \[ \xymatrix{
           \R \mathcal{Z}_{P^n}  \ar[r]^{i_{\mathcal{Z}}} \ar[d] &  \R^m \ar[d]^{\mu}\\
        P^n \ar[r]^{i} &  \R^m_{\geq 0}              
           } \]
  Here $\mu(x_1,\cdots, x_m)=(x^2_1,\cdots, x^2_m)$, and 
  $i_{\mathcal{Z}}$ is a $(\Z_2)^m$-equivariant embedding. 
  The map $i$ is defined by the information of all the hyperplanes of $\R^n$ that bound $P^n$.
  Since the argument is completely parallel to the moment-angle manifolds case 
     in~\cite[$\S 3.1$]{BP-Panov-Ray07}, we leave it to the reader.
  It is not hard to see that $\R \mathcal{Z}_{P^n}$ embeds into $\R^m$ as the intersection
  of $m-n$ real quadrics 
   whose intersection is everywhere non-degenerate.
    So $\R \mathcal{Z}_{P^n}$ gets a $(\Z_2)^m$-equivariant smooth structure in this way.
      Moreover, any small cover over $P^n$ gets a $(\Z_2)^n$-equivariant smooth structure 
  as the smooth quotient
  of $\R \mathcal{Z}_{P^n}$ by a rank $m-n$ subgroup of $(\Z_2)^m$.\nn
  
 To show the uniqueness of equivariant smooth structures on small covers and real moment-angle
 manifolds, 
  we first recall some terminology and facts in the theory of $G$-normal systems developed by Davis in~\cite{Davis78}.
  It is shown in~\cite{Davis78} 
  that for any compact Lie group $G$, the diffeomorphism types of smooth $G$-manifolds 
   are in one-to-one correspondence with the isomorphism types of \emph{$G$-normal systems} 
 (see~\cite[Definition 4.1]{Davis78}).  
 In addition, the $G$-normal system associated to a smooth $G$-manifold
 $M$ determines a \emph{$B$-normal system} which further determines a
 \emph{local $G$-orbit space structure} on $M\slash G$ (see~\cite[p.335--336]{Davis13}),
  that is a collection of local
charts of $M\slash G$ so that the transition functions are stratified isomorphisms.
 It is shown in~\cite[Theorem 4.4]{Davis78} that there is a bijection between
isomorphism classes of local G-orbit spaces and isomorphism
classes of $B$-normal systems. 
  In particular when $M\slash G$ is a manifold with corners, a local $G$-orbit space structure
  on $M\slash G$ uniquely defines a smooth structure on $M\slash G$.
 These relations allow us to classify the diffeomorphism types of 
 smooth $G$-manifolds in terms of the smooth structures on the orbit spaces in some cases.
 For example, \cite{Wiem13} uses this strategy 
  to show that the $T$-equivariant smooth structure
 on a quasitoric manifold is unique up to equivariant diffeomorphism.
 \nn
  
  Indeed, we find that the argument in~\cite{Wiem13} also
 works for small covers and real moment-angle manifolds. 
 More specifically, suppose
  $M^n$ is an $n$-dimensional small cover over a simple polytope $P^n$ and $G=(\Z_2)^n$.   
    Let $\mathcal{T}_1$ and $\mathcal{T}_2$ be two $G$-equivariant 
  smooth structures on $M^n$.
   The $B$-normal systems on $P^n$ determined by $(M^n,\mathcal{T}_1)$ and
   $(M^n,\mathcal{T}_2)$
  correspond to two smooth structures on $P^n$.   
  Since it has been recently proved that 
 all smooth structures on a simple convex polytope are diffeomorphic 
  (see~\cite[Corollary 5.3]{Wiem13} or~\cite[Corollary 1.3]{Davis13}),
   the $B$-systems determined by
  $(M^n, \mathcal{T}_1)$ and $(M^n,\mathcal{T}_2)$ must be isomorphic. 
  Then using the same idea as the proof of~\cite[Theorem 5.6]{Wiem13}, we can
  inductively 
  construct an isomorphism between the $G$-normal systems of
 $(M^n, \mathcal{T}_1)$ and $(M^n,\mathcal{T}_2)$ according to the isomorphism
 of their $B$-normal systems. In fact, the construction
  in our case
  is much easier than that in~\cite{Wiem13} since
   our group $G=(\Z_2)^n$ has trivial homotopy groups, so there is no obstruction to
   extending an isomorphism 
    from the $k$-dimensional strata to $(k+1)$-dimensional
   strata of the $G$-normal systems for any $k\geq 0$. 
 So we obtain the following result which is parallel to~\cite[Corollary 5.7]{Wiem13}.\nn

 \begin{prop} \label{Prop:Equiv-Diffeo}
  If $\mathcal{T}_1$ and $\mathcal{T}_2$ are two equivariant 
  smooth structures on a small cover $M$, then
   $(M, \mathcal{T}_1)$ must be equivariantly diffeomorphic to $(M,\mathcal{T}_2)$.
  \end{prop}

 Similarly, we can show that any real moment-angle manifold $\R\mathcal{Z}_P$ has a
 unique equivariant smooth structure up to equivariant diffeomorphism.
 So in the rest of the paper, we always assume that a small cover or a real moment-angle manifold
carries the equivariant smooth structure.\nn

 \begin{cor} \label{Cor:Infra-Solv-2}
  Suppose $P^n$ is an $n$-dimensional simple convex polytope.
 \begin{itemize}
  \item[(i)] The real moment-angle manifold $\R\mathcal{Z}_{P^n}$
   of $P^n$ is homeomorphic to an infra-solvmanifold if and only if
    $\R\mathcal{Z}_{P^n}$ is diffeomorphic to the $n$-dimensional flat
    torus.\n

   \item[(ii)] A small cover $M^n$ over $P^n$
    is homeomorphic to an infra-solvmanifold if and only if $M^n$ is diffeomorphic
     to a real Bott manifold.
 \end{itemize}
 \end{cor}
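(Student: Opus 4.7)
The plan is to derive both parts from Corollary~\ref{Cor:Infra-Solv} combined with the smooth rigidity of compact infra-solvmanifolds established by Baues~\cite{Bau04}, together with Example~\ref{Exam:Bott} which identifies small covers over the $n$-cube with real Bott manifolds. Throughout I will use the elementary observation that any infra-solvmanifold $\Gamma\backslash G$ is aspherical (its universal cover $G$ is contractible and $\Gamma$ acts freely) with virtually solvable fundamental group (since $\Gamma\cap G$ is a finite-index solvable subgroup of $\Gamma$).

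For (i), suppose $\R\mathcal{Z}_{P^n}$ is homeomorphic to an infra-solvmanifold. The observation above says that $\R\mathcal{Z}_{P^n}$ is aspherical with virtually solvable fundamental group, so Corollary~\ref{Cor:Infra-Solv} forces $P^n$ to be an $n$-cube and $\R\mathcal{Z}_{P^n}$ to be homeomorphic to the $n$-torus. If moreover $\R\mathcal{Z}_{P^n}$ is itself an infra-solvmanifold, I invoke smooth rigidity~\cite{Bau04}: $\R\mathcal{Z}_{P^n}$ and the standard flat $n$-torus are both compact infra-solvmanifolds with fundamental group $\Z^n$, hence diffeomorphic. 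The converse is trivial, since the flat torus is by definition a flat Riemannian manifold and therefore an infra-solvmanifold.

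For (ii), the idea is to transport the hypothesis to $\R\mathcal{Z}_{P^n}$ through the finite regular cover $\R\mathcal{Z}_{P^n}\to M^n$ provided by~\eqref{Equ:moment-small}. A finite cover of an infra-solvmanifold is again an infra-solvmanifold — one just replaces $\Gamma$ by the finite-index subgroup corresponding to the cover and forms $\Gamma'\backslash G$ — so $\R\mathcal{Z}_{P^n}$ is also homeomorphic to an infra-solvmanifold. Part (i) then forces $P^n$ to be an $n$-cube, and by Example~\ref{Exam:Bott} every small cover over the $n$-cube is a real Bott manifold $B$, which yields the homeomorphism statement. For the diffeomorphism version, $B$ carries a flat Riemannian metric by~\cite{MasKam09-1} and is therefore a compact infra-solvmanifold with $\pi_1(B)\cong\pi_1(M^n)$; a second appeal to smooth rigidity~\cite{Bau04} gives $M^n\cong B$ diffeomorphically. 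The ``in particular'' clause follows from the diffeomorphism version because any compact flat Riemannian manifold is, by definition, an infra-solvmanifold.

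Since all the substantive inputs — Corollary~\ref{Cor:Infra-Solv}, Baues' rigidity, and the identification in Example~\ref{Exam:Bott} — are already available, no real obstacle remains. The only points that require a little care are keeping the homeomorphism and diffeomorphism assertions in sync, and checking that whenever rigidity is invoked both sides are genuine compact infra-solvmanifolds whose fundamental groups truly match.
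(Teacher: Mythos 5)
Your proposal is correct and follows essentially the same route as the paper: reduce to Corollary~\ref{Cor:Infra-Solv} by observing that infra-solvmanifolds are aspherical with virtually solvable fundamental group, then finish with Baues' smooth rigidity and the identification of small covers over a cube with real Bott manifolds. The one cosmetic difference is that for (ii) you pass the infra-solvmanifold hypothesis up to $\R\mathcal{Z}_{P^n}$ via the fact that finite covers of infra-solvmanifolds are again infra-solvmanifolds, whereas the paper simply notes that the finite cover $\R\mathcal{Z}_{P^n}$ inherits asphericity and virtual solvability of $\pi_1$ and then applies Corollary~\ref{Cor:Infra-Solv} directly; both are valid and essentially equivalent.
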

 \begin{proof}
   By Corollary~\ref{Cor:Infra-Solv}, if a small cover $M^n$ over $P^n$
    is homeomorphic to an infra-solvmanifold, $P^n$ must be an
    $n$-dimensional cube. So $M^n$ is equivariantly homeomorphic to 
    a real Bott manifold $B_n$. Then since the canonical $(\Z_2)^n$-action
     on $B_n$ is smooth, Proposition~\ref{Prop:Equiv-Diffeo} implies that 
     $M^n$ must be equivariantly diffeomorphic to
     $B_n$. The proof of (i) is similar.     
  \end{proof}

 \n

 \noindent \textbf{Question:} Let $P$ and $Q$ be two simple convex polytopes.
   If $\R\mathcal{Z}_{P}$ and $\R \mathcal{Z}_{Q}$
 are homeomorphic or diffeomorphic, what can we conclude about the relationship between the combinatorial properties
  of $P$ and $Q$? \nn

  By Proposition~\ref{Prop:Asph-Flag}, if $\R\mathcal{Z}_{P}$ is homeomorphic to $\R \mathcal{Z}_{Q}$,
  then $P$ and $Q$ are either both flag or both
  non-flag. It would be interesting to see more answers to this question.
  \\

\section{Flag simple polytopes and Real Bott manifolds}
  In this section, we will get several different ways to describe a small
  cover that is homeomorphic to a real Bott manifold.
  As we know from the definition, any real Bott manifold of dimension $n$ is a small cover over an $n$-cube.
 It is clear that an $n$-cube is a flag simple polytope with $2n$ facets.
  Conversely, we can show the following.
\begin{prop} \label{flag}
 Let $P^n$ be a flag simple polytope of dimension $n$.  Then $P^n$
 has at least $2n$ facets, and if $P^n$ has exactly $2n$ facets,
  $P$ must be an $n$-cube.
\end{prop}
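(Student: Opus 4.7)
My plan is to induct on $n$. The base cases are immediate: for $n=1$, $P^1$ is a closed interval with exactly $2$ facets and is the $1$-cube; for $n=2$, a flag polygon is one with no $3$-gonal face, hence has at least $4$ edges, with equality exactly for the square.

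For the inductive step ($n\geq 3$), I would fix an arbitrary facet $F$ of $P^n$. By Lemma \ref{Lem:Face-Flag}, $F$ is itself a flag simple polytope of dimension $n-1$, so by the inductive hypothesis it has at least $2(n-1)$ facets. Each facet of $F$ has the form $F\cap F_i$ for a uniquely determined facet $F_i$ of $P^n$ distinct from and adjacent to $F$, so at least $2(n-1)$ facets of $P^n$ meet $F$.

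The heart of the proof is to produce one more facet $F^{\ast}$ satisfying $F\cap F^{\ast}=\emptyset$. For this I would invoke the nerve complex $K$ whose vertex set is the collection of facets of $P^n$ and whose simplices are the subsets of facets with nonempty common intersection. Because $P^n$ is simple, $K$ is the boundary complex of the simplicial polytope dual to $P^n$ and in particular triangulates $S^{n-1}$; because $P^n$ is flag, $K$ is a flag simplicial complex. If every facet of $P^n$ met $F$, then the vertex $F$ in $K$ would be adjacent to every other vertex, and flagness would let us adjoin $F$ to every simplex of $K$, forcing $K = F\ast\mathrm{lk}(F)$ to be a cone; this contradicts $K\simeq S^{n-1}$ since a sphere is not contractible. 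Hence $F^{\ast}$ exists, and the total facet count is at least $2(n-1)+2 = 2n$.

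For the equality case $r=2n$, all the inequalities above are forced to be equalities: $F$ has exactly $2(n-1)$ facets and is an $(n-1)$-cube by induction, and there is a unique facet $F^{\ast}$ disjoint from $F$. Running the same argument with each facet in place of $F$ pairs the $2n$ facets into $n$ two-element blocks $\{F_i,F_i^{\ast}\}$ of mutually disjoint facets such that any two facets from distinct blocks intersect. By flagness, any transversal of this pairing (one facet from each block) has nonempty common intersection, which by simpleness must be a single vertex of $P^n$; this produces $2^n$ vertices and identifies the face lattice of $P^n$ with that of the $n$-cube. The main obstacle in the whole argument is the cone step for the existence of $F^{\ast}$, which crucially uses that $S^{n-1}$ is not contractible; the bookkeeping for the equality case is routine once the disjointness pairing has been set up.
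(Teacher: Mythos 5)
Your proof is correct, and it takes a genuinely different route from the paper's. The paper's argument is direct and non-inductive: it fixes one vertex $v=\bigcap_{i=1}^n F_i$, constructs the ``opposite'' facets $G_1,\dots,G_n$ across the edges through $v$, uses flagness to see the $G_j$'s are distinct from each other and from the $F_i$'s (giving the bound $2n$), and then in the equality case proves $F_j\cap G_j=\emptyset$ and $(\bigcap_{i\notin J}F_i)\cap(\bigcap_{j\in J}G_j)\neq\emptyset$ by an induction on $|J|$, thereby exhibiting $\partial P^*$ as the boundary of a crosspolytope. Your proof instead inducts on dimension, invoking Lemma~\ref{Lem:Face-Flag} to pass to a facet $F$, which is a flag $(n-1)$-polytope, and then supplies the one extra facet $F^*$ disjoint from $F$ by a clean topological observation: if $F$ were adjacent to every other vertex of the nerve $K=\partial P^*$, flagness would force $K$ to equal $F*\mathrm{lk}(F)$, a cone, contradicting $K\simeq S^{n-1}$. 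The equality analysis then proceeds via the global fixed-point-free involution $F\mapsto F^*$, recovering the crosspolytope structure of $K$ from the block pairing, which is logically the same endpoint reached by a different bookkeeping. What your version buys is conceptual economy in the existence step (the contractibility contradiction replaces the explicit construction of the $G_j$'s), at the cost of needing the inductive setup and the identification of $K$ with a triangulated $S^{n-1}$; the paper's version is more elementary in that it never appeals to non-contractibility of the sphere, only to the combinatorial dual-polytope correspondence. One small point worth making explicit in the equality case: the verification that distinct transversals of the pairing give distinct vertices, and that every vertex arises this way, follows from simplicity (a vertex lies on exactly $n$ facets, no two of which can be disjoint), which is what actually pins down the nerve as the crosspolytope boundary rather than merely the vertex count $2^n$.
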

\begin{proof}
  This is probably known to many people. But since we do not know the literature, we
 shall give a proof here for the sake of completeness of the paper.\n

   Since $P^n$ is simple and of dimension $n$, there are $n$ facets in
  $P^n$ whose intersection is a vertex.  We denote them by
   $F_1,\dots,F_n$ and the vertex $\bigcap_{i=1}^n F_i$ by $v$.  For
  each $j\in [n]:=\{1,\dots,n\}$, the intersection
  $\bigcap_{i\not=j}F_i$ is an edge of $P^n$ which has $v$ as an
  endpoint.  Therefore, there is a unique facet of $P^n$, denoted
  $G_j$, such that $(\bigcap_{i\not=j}F_i)\cap G_j$ is the other
  endpoint of the edge $\bigcap_{i\not=j}F_i$ different from $v$. \n

  We claim that $G_j$'s must be mutually distinct.  Indeed, if
  $G_p=G_q$ for some $p\not=q$, this implies that any two of the $n+1$
  facets $F_1,\dots,F_n,G_p=G_q$ have non-empty intersection because
   $(\bigcap_{i\not=j}F_i)\cap G_j$ is non-empty for any $j$. Therefore
   the intersection of the $n+1$ facets must be non-empty since $P^n$
  is flag.  However, this is impossible because $P^n$ is simple and of
  dimension $n$.  Therefore $G_j$'s are mutually distinct and hence
  $P^n$ has at least $2n$ facets, proving the former statement of the
  proposition. \n

  Hereafter we assume that $P^n$ has exactly $2n$ facets.  Then the
  facets of $P^n$ are exactly $F_1,\dots,F_n, G_1,\dots,G_n$.  Since
  $\bigcap_{j=1}^nF_j$ and $(\bigcap_{i\not=j}F_i)\cap G_j$ are both
 non-empty, any two of the $n+1$ facets $F_1,\dots,F_n,G_j$ have
  non-empty intersection if $F_j\cap G_j\not=\emptyset$.  However,
  this is impossible by the same reason as above.  Therefore
  \begin{equation} \label{FGempty}
   \text{$F_j\cap G_j=\emptyset$ for any $j\in [n]$.}
  \end{equation}

  We shall prove that
\begin{equation} \label{FG}
 (\bigcap_{i\notin J}F_i) \cap (\bigcap_{j\in J}G_j)\not=\emptyset
  \quad\text{for any subset $J$ of $[n]$}
 \end{equation}
  by induction on the cardinality $|J|$ of $J$. Since
  $(\bigcap_{i\not=j}F_i)\cap G_j$ is a vertex of $P^n$ by the choice
  of $G_j$, \eqref{FG} holds when $|J|=1$.  Suppose that \eqref{FG}
  holds for $J$ with $|J|=k-1$.  Let $J$ be a subset of $[n]$ with
   $|J|=k$. Without loss of generality, we may assume that
   $J=\{1,2,\dots,k\}$. By the induction assumption, we have
    $(\bigcap_{i=k}^{n}F_i)\cap (\bigcap_{j=1}^{k-1}G_j)\not=\emptyset$.
    Since $P^n$ is simple, $(\bigcap_{i=k}^{n}F_i)\cap \bigcap_{j=1}^{k-1}G_j)$
    is a vertex of $P^n$, denoted $w$, and
  $(\bigcap_{i=k+1}^{n}F_i)\cap (\bigcap_{j=1}^{k-1}G_j)$ is an edge
  of $P^n$ which contains the vertex $w$.  Therefore, there is a
  unique facet $H$ of $P^n$ such that
  \begin{equation} \label{FGH}
   (\bigcap_{i=k+1}^{n}F_i)\cap (\bigcap_{j=1}^{k-1}G_j)\cap H \
   \text{is a vertex of $P^n$ different from $w$}.
  \end{equation}
  We claim that $H=G_k$.  In fact, since the intersection in~\eqref{FGH} is a vertex,
  $H$ must be either $F_p$ for $1\le p\le k$
    or $G_q$ for $k\le q\le n$.  However, the intersection in~\eqref{FGH} is empty unless $H=F_k$
    or $G_k$ by~\eqref{FGempty}.
   Moreover, $H\not=F_k$ because the intersection in~\eqref{FGH} is
  different from $w$.  Therefore we can conclude $H=G_k$ and this shows
  that~\eqref{FG} holds for $J$ with $|J|=k$, completing the induction
  step. \n

  Let $P^*$ be the simplicial polytope dual to $P^n$.
   Then the facts \eqref{FGempty} and \eqref{FG} show that the boundary
   complex $\partial P^*$ is isomorphic to the boundary complex of a
   crosspolytope $C$ of dimension $n$, which is isomorphic to the
  $n$-fold join of $S^0$.  Therefore the simplicial polytopes
   $P^*$ and $C$ are isomorphic combinatorially and hence so are
  their duals $P^n$ and $C^*$.  Since $C^*$ is an $n$-cube, this
  proves the latter statement of the proposition.
\end{proof}

\begin{rem}
   The above argument shows that if the geometrical realization of a
  flag simplicial complex $K$ is a pseudomanifold of dimension $n-1$,
  then the number of vertices of $K$ is at least $2n$; and if it is
  exactly $2n$, then $K$ is isomorphic to the boundary complex of the
  crosspolytope of dimension $n$.
\end{rem}

 Combining all our previous discussions, we get several
  descriptions of a small cover that is homeomorphic to a real Bott
  manifold as follows.\nn

\begin{thm} \label{thm:Test-Real-Bott}
 Suppose $M^n$ is a small cover over a simple convex polytope $P^n$ of dimension $n$.
 Let $b_1(M^n;\Z_2)$ denote the first Betti number of $M$ with $\Z_2$-coefficients. Then the
 following statements are equivalent.
 \begin{itemize}
   \item $M^n$ is homeomorphic to a real Bott manifold.\n
   \item $M^n$ is aspherical and $b_1(M^n ; \Z_2)\le n$. \n
   \item $P^n$ is flag and the number of facets of $P^n$ is $\leq 2n$. \n
   \item $P^n$ is an $n$-cube.
 \end{itemize}
\end{thm}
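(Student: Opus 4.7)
The plan is to prove the four conditions equivalent via the cyclic chain: $P^n$ an $n$-cube $\Rightarrow$ $M^n$ a real Bott manifold $\Rightarrow$ $M^n$ aspherical with $b_1(M^n;\Z_2)\le n$ $\Rightarrow$ $P^n$ flag with at most $2n$ facets $\Rightarrow$ $P^n$ an $n$-cube. The quantitative key is the identity $b_1(M^n;\Z_2)=r-n$ for any small cover $M^n$ over a simple $n$-polytope with $r$ facets; this follows from the Davis--Januszkiewicz presentation of $H^*(M^n;\Z_2)$ in~\cite{DaJan91}, since the $n$ degree-one relations coming from the characteristic function (which must surject onto $(\Z_2)^n$) are linearly independent.

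For the first link, when $P^n$ is the $n$-cube, Example~\ref{Exam:Bott} identifies any small cover over $P^n$ (up to homeomorphism) as a generalized real Bott manifold all of whose projective fibers are $\R P^1$, i.e.\ as a real Bott manifold. For the second link, any real Bott manifold admits a flat Riemannian metric by~\cite{MasKam09-1} as recalled at the start of Section~3, so its universal cover is $\R^n$ and it is aspherical; applying the formula $b_1=r-n$ with $r=2n$ gives $b_1=n$, and since the $\Z_2$-Betti number is a topological invariant this transports to $M^n$, yielding $b_1(M^n;\Z_2)=n\le n$.

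For the third link, asphericality of a small cover over $P^n$ forces $P^n$ to be flag by the Davis--Januszkiewicz--Scott theorem~\cite[Theorem 2.2.5]{DavJanScott98} quoted just before Proposition~\ref{Prop:Asph-Flag}, while the bound $b_1(M^n;\Z_2)\le n$ combined with $b_1=r-n$ yields $r\le 2n$. The fourth link is precisely Proposition~\ref{flag}: a flag simple $n$-polytope has at least $2n$ facets, with equality only for the $n$-cube. No serious obstacle remains---the theorem is essentially a consolidation of earlier results, the one mildly delicate point being the Davis--Januszkiewicz cohomology computation that underlies $b_1(M^n;\Z_2)=r-n$.
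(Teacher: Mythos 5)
Your proof is correct and follows essentially the same route as the paper's: both rely on the Davis--Januszkiewicz identity $b_1(M^n;\Z_2)=r-n$, the Davis--Januszkiewicz--Scott equivalence ``$M^n$ aspherical $\Leftrightarrow$ $P^n$ flag,'' and Proposition~\ref{flag} to pin down the $n$-cube. The only cosmetic difference is organizational: you arrange the implications in a cycle and obtain asphericality of a real Bott manifold from the existence of a flat metric (Cartan--Hadamard), whereas the paper proves the pairwise biconditionals directly and implicitly treats the equivalence between the first and last items as definitional via Example~\ref{Exam:Bott}; both arguments use the same ingredients and are equally valid.
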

\begin{proof}
  Suppose $P^n$ has $r$ facets. It is known that $r=b_1(M^n;\Z_2) + n$ (see~\cite{DaJan91}).
  Then $b_1(M^n;\Z_2)\le n \Longleftrightarrow r\leq 2n$. In addition,
   $M$ is aspherical $\Longleftrightarrow P^n$ is flag
  by~\cite[Theorem 2.2.5]{DavJanScott98}. Then this proposition follows from Proposition~\ref{flag}.
\end{proof}

  \nn

  \nn

 \section{Riemannian metrics on small covers and real moment-angle manifolds}

     Geometric structures on small covers were first discussed in~\cite[Example 1.21]{DaJan91}
      for $3$-dimensional cases. Later, Davis-Januszkiewcz-Scott~\cite{DavJanScott98}
     systematically studied some piecewise Euclidean structures on small covers 
     called the \emph{natural piecewise Euclidean cubical metric}.
     A very nice result obtained in~\cite[Proposition 2.2.3]{DavJanScott98} says that
      the natural piecewise Euclidean cubical metric on a small cover over a simple polytope $P$
       is nonpositively curved if and only if $P$ is
        a flag polytope (this is also equivalent to
      saying that the small cover is aspherical).\nn

      In this section, we will study Riemannian metrics on small covers and
      real moment-angle manifolds in any dimension with
     certain conditions on the Ricci and sectional curvatures.
      By our discussion of the fundamental groups of small covers
     and real moment-angle manifolds in section 2, we will see that these
     curvature conditions put very strong restrictions on the
     topology of the corresponding small covers and real moment-angle
     manifolds and the combinatorics of the underlying simple
     polytopes.\nn

     A Riemannian manifold is called \emph{positively} (\emph{nonnegatively},
    \emph{nonpositively}, \emph{negatively}) \emph{curved} if its \emph{sectional curvature} is everywhere
    positive (nonnegative, nonpositive, negative). It is clear that a positively
    (nonnegatively, nonpositively, negatively) curved Riemannian manifold has positive
    (nonnegative, nonpositive, negative)
    \emph{Ricci curvature}. \nn

 \subsection{Positive curvature}

   By a classical theorem of Bonnet and Myers,
    any compact Riemannian manifold with positive Ricci curvature
     must have finite fundamental group (see Chapter 6 of~\cite{Peter06}).
   Then by Corollary~\ref{Cor:virtually-nilpotent} and Corollary~\ref{Cor:Finite-Fund-Group},
   we have the following corollary.
   \n

   \begin{cor} \label{Cor:Ricci}
   Let $P^n$ be an $n$-dimensional simple convex polytope.
    If the real moment-angle manifold $\R\mathcal{Z}_{P^n}$ admits a
    Riemannian metric with positive Ricci curvature, then $P^n$ must be $2$-neighborly
    and $\R\mathcal{Z}_{P^n}$ is simply connected.  Similarly, if a small cover $M^n$
    over $P^n$ admits a Riemannian metric with positive Ricci curvature, then $P^n$ must
    be $2$-neighborly and the fundamental group of $M^n$
   is isomorphic to $(\Z_2)^{r-n}$ where $r$ is the number of facets of $P^n$.
   \end{cor}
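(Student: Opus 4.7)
The plan is to reduce the corollary to the fundamental-group results already established in Section~2, using the classical Bonnet--Myers theorem as the geometric input. Recall Bonnet--Myers says that any compact, connected Riemannian manifold whose Ricci curvature is bounded below by a positive constant has finite diameter and hence finite fundamental group. Since $\R\mathcal{Z}_{P^n}$ and every small cover $M^n$ over $P^n$ are compact (the polytope $P^n$ is compact and they are built by gluing finitely many copies of $P^n$), Bonnet--Myers applies directly once a metric of positive Ricci curvature is given.

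For the first assertion, I would argue as follows. Suppose $\R\mathcal{Z}_{P^n}$ carries a Riemannian metric with positive Ricci curvature. By Bonnet--Myers, $\pi_1(\R\mathcal{Z}_{P^n})$ is finite. A finite group is in particular virtually nilpotent, so Corollary~\ref{Cor:virtually-nilpotent} applies and forces $\pi_1(\R\mathcal{Z}_{P^n})$ to be trivial and $P^n$ to be $2$-neighborly. This gives both conclusions of the first half of the corollary.

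For the second assertion, suppose a small cover $M^n$ over $P^n$ admits a Riemannian metric with positive Ricci curvature. Again by Bonnet--Myers, $\pi_1(M^n)$ is finite. Corollary~\ref{Cor:Finite-Fund-Group} then directly yields that $P^n$ is $2$-neighborly and that $\pi_1(M^n)\cong(\Z_2)^{r-n}$. Alternatively, one can use the exact sequence~\eqref{Equ:moment-small} to note that $\pi_1(\R\mathcal{Z}_{P^n})$ is a subgroup of the finite group $\pi_1(M^n)$, hence finite, and then reduce to the first half via Proposition~\ref{Prop:2-neighborly}.

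There is essentially no obstacle in this proof: the work has been done upstream in Corollary~\ref{Cor:virtually-nilpotent} and Corollary~\ref{Cor:Finite-Fund-Group}, where the structure of the right-angled Coxeter group $W_{P^n}$ was analyzed via Qi's theorem (Theorem~\ref{thm:Qi}) and the classification of affine right-angled Coxeter groups. The only point worth stating explicitly is that $\R\mathcal{Z}_{P^n}$ and $M^n$ are indeed closed manifolds so that Bonnet--Myers is applicable; this is immediate from the construction in~\eqref{Equ:Real-Moment-Angle} and from the definition of a small cover. The proof is therefore a short two-line deduction citing Bonnet--Myers together with the two corollaries from Section~2.
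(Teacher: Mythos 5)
Your proposal matches the paper's argument exactly: the paper also invokes Bonnet--Myers to conclude that the fundamental group is finite, then applies Corollary~\ref{Cor:virtually-nilpotent} for the real moment-angle manifold and Corollary~\ref{Cor:Finite-Fund-Group} for the small cover. There is nothing to add; the proof is correct and is the same as the one in the paper.
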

   \n

  The following is a well-known fact on positively curved Riemannian manifolds
 (see Chapter 6 of~\cite{Peter06}). \nn

  \begin{thm}[Synge 1936]
    Let $M$ be a compact Riemannian manifold with positive sectional curvature.
 \begin{itemize}
   \item[(i)] If $M$ is even-dimensional and orientable, then $M$
                    is simply connected.\n

  \item[(ii)] If $M$ is odd-dimensional, then $M$ is orientable.
\end{itemize}
  \end{thm}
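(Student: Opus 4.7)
The plan is to derive both parts from a single second-variation argument applied to a length-minimizing closed geodesic. Recall that on any compact Riemannian manifold every non-trivial free homotopy class contains a smooth closed geodesic realizing the infimum of length in that class; this will be the source of the contradiction in both situations.

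For part (i), I would suppose $\pi_1(M)\ne 1$ and pick a length-minimizing closed geodesic $\gamma\colon [0,L]\to M$ representing a non-trivial class. Parallel transport $P$ along $\gamma$ is an orthogonal map of $T_{\gamma(0)}M$ that fixes $\gamma'(0)$, so it restricts to an orthogonal map $P'$ of the $(n-1)$-dimensional normal space. Since $M$ is orientable, the holonomy $P$ lies in $SO(n)$, so $\det P'=+1$. Because $n$ is even, $n-1$ is odd, and an element of $SO(n-1)$ with $n-1$ odd always has $+1$ as an eigenvalue. Choose a unit eigenvector $v$ with $P'v=v$ and extend it by parallel transport to a parallel normal vector field $V$ along $\gamma$; the fact that $P'v=v$ guarantees $V$ closes up smoothly.

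For part (ii), I would suppose instead that $M$ is non-orientable. The orientation homomorphism $w_1\colon \pi_1(M)\to \Z_2$ is then surjective, and I pick a minimizing closed geodesic $\gamma$ representing a class with $w_1=-1$; parallel transport along $\gamma$ now has determinant $-1$ on $T_{\gamma(0)}M$, hence on the normal $(n-1)$-dimensional space as well. Since $n$ is odd, $n-1$ is even, and an even-dimensional orthogonal matrix of determinant $-1$ must possess $+1$ as an eigenvalue (complex conjugate eigenvalue pairs have product $1$, and an even number of real eigenvalues with an odd number of $-1$'s forces an odd, hence positive, number of $+1$'s). Again this produces a parallel normal unit vector field $V$ along $\gamma$.

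In either case the variation $f_s(t)=\exp_{\gamma(t)}(sV(t))$ consists of piecewise smooth closed curves freely homotopic to $\gamma$ for small $s$, and the second-variation formula for arc length, using that $V$ is parallel and orthogonal to $\gamma'$ with $|V|=1$, reduces to
\[
L''(0) = -\int_0^L K\bigl(V(t),\gamma'(t)\bigr)\,dt,
\]
where $K$ denotes sectional curvature. Positivity of $K$ makes this integral strictly negative, so $f_s$ is strictly shorter than $\gamma$ for small $s>0$ in the same free homotopy class, contradicting the minimality of $\gamma$. The main delicate point is the linear-algebra step that produces the eigenvector $v$ with the correct parity of dimension and determinant; the rest is the standard second-variation calculation. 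An auxiliary subtlety worth checking is that the variation genuinely stays in the same free homotopy class, which is automatic because the homotopy $(s,t)\mapsto f_s(t)$ itself furnishes the homotopy from $\gamma$ to $f_s$.
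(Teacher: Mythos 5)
The paper cites Synge's theorem as a well-known fact, referencing Chapter~6 of Petersen's \emph{Riemannian Geometry}, without reproducing a proof. Your argument is the standard second-variation proof found in that reference: the parity analysis of the normal holonomy (odd-dimensional $SO$ in case (i), even-dimensional $O$ with determinant $-1$ in case (ii), using that $w_1$ descends to free homotopy classes since $\Z_2$ is abelian) produces a parallel unit normal field along a length-minimizing closed geodesic, and the second-variation formula then yields a strictly shorter nearby closed curve in the same free homotopy class, a contradiction. The proposal is correct and is essentially the proof the paper's citation points to.
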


  Notice that a small cover is never simply connected (this is an easy consequence
   of~\eqref{Equ:Fund-Group}). So by Synge's theorem, we can conclude the following.

   \begin{cor}
    If an even (odd) dimensional small cover admits a positively
   curved Riemannian metric, then it must be
   non-orientable (orientable).
   \end{cor}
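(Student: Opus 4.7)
The plan is to apply Synge's theorem directly, using the fact (already noted in the paragraph just before the corollary) that a small cover is never simply connected.

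First I would dispose of the odd-dimensional case, which is immediate: if $M^n$ has positive sectional curvature and $n$ is odd, then part (ii) of Synge's theorem gives orientability with no further work.

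For the even-dimensional case I would argue by contradiction. Suppose $M^n$ is even-dimensional, admits a positively curved metric, and is orientable. Then part (i) of Synge's theorem forces $M^n$ to be simply connected. To derive a contradiction, I would invoke the exact sequence
\begin{equation*}
1 \longrightarrow \pi_1(M^n) \longrightarrow W_{P^n} \overset{\overline{\lambda}_{M^n}\circ \text{Ab}}{\longrightarrow} (\Z_2)^n \longrightarrow 1
\end{equation*}
from \eqref{Equ:Fund-Group}. If $\pi_1(M^n)$ were trivial, this sequence would give $W_{P^n}\cong(\Z_2)^n$, and in particular the abelianization $W_{P^n}^{ab}\cong(\Z_2)^r$ would have rank $n$. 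But any $n$-dimensional simple convex polytope has at least $n+1$ facets, so $r\ge n+1$, a contradiction. Hence $\pi_1(M^n)$ is always nontrivial, so $M^n$ cannot be simply connected, and the orientability assumption fails. Therefore $M^n$ must be non-orientable.

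There is no real obstacle here; the only subtlety is recording the standard fact that a small cover has nontrivial fundamental group, which follows combinatorially from $r\ge n+1$ together with the exact sequence \eqref{Equ:Fund-Group}. Everything else is a direct citation of Synge's theorem.
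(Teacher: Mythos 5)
Your proof is correct and follows the same approach the paper takes: cite the fact that a small cover is never simply connected and then apply Synge's theorem in each parity. The paper simply asserts the non-simply-connectedness as an easy consequence of \eqref{Equ:Fund-Group}, whereas you spell out the $r\ge n+1$ count; the substance is identical.
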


 It is well known that
 a simply connected closed smooth $n$-manifold which admits a Riemannian metric with positive \emph{constant}
 sectional curvature is diffeomorphic to the standard sphere $S^n$ in $\R^{n+1}$.
\nn

 \begin{thm} \label{thm:Spherical}
  Let $P^n$ be an $n$-dimensional simple convex polytope.
  If the real moment-angle manifold $\R\mathcal{Z}_{P^n}$ (or a small cover $M^n$) of $P^n$ admits a
    Riemannian metric with positive constant sectional
    curvature, then $\R\mathcal{Z}_{P^n}$ (or $M^n$) is diffeomorphic to the standard sphere
    $S^n$ (or the real projective space $\R \mathbb{P}^n$).
 \end{thm}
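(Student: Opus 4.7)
The plan is to reduce both statements to the classical fact that a complete simply connected Riemannian manifold of positive constant sectional curvature is isometric to a round sphere, and then to handle the small-cover case by analysing free actions of the deck-transformation group on $S^n$.

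For the real moment-angle manifold $\R\mathcal{Z}_{P^n}$, the argument is short: a metric of positive constant sectional curvature has, in particular, positive Ricci curvature, so Corollary~\ref{Cor:Ricci} applies and forces $\R\mathcal{Z}_{P^n}$ to be simply connected. A complete simply connected $n$-manifold of positive constant sectional curvature is then diffeomorphic (in fact isometric after normalizing the curvature) to the standard $n$-sphere $S^n$, which handles the first assertion.

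For a small cover $M^n$ over $P^n$, I would pull the given Riemannian metric back along the $(\Z_2)^{r-n}$-covering $\R\mathcal{Z}_{P^n}\to M^n$. Covering projections are local isometries, so $\R\mathcal{Z}_{P^n}$ also carries a metric of the same positive constant sectional curvature. By the first part, $\R\mathcal{Z}_{P^n}$ is simply connected and diffeomorphic to $S^n$, and it is therefore the universal cover of $M^n$. Hence $M^n$ is an isometric quotient $S^n/\Gamma$ with $\Gamma\cong\pi_1(M^n)\cong(\Z_2)^{r-n}$ acting freely, where $r$ is the number of facets of $P^n$ and Corollary~\ref{Cor:Ricci} also tells us $P^n$ is $2$-neighborly.

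The main obstacle is ruling out large elementary abelian quotients of $S^n$. Here I would invoke the classical theorem of P.~A.~Smith: if a finite group acts freely on a sphere, every abelian subgroup is cyclic. Since $(\Z_2)^k$ is non-cyclic for $k\ge 2$, this forces $r-n\le 1$. On the other hand, any $n$-dimensional simple polytope has at least $n+1$ facets, so $r=n+1$. A $2$-neighborly simple $n$-polytope with $n+1$ facets has as its dual a simplicial $n$-polytope on $n+1$ vertices, which is an $n$-simplex; hence $P^n=\Delta^n$. Finally, any characteristic function on $\Delta^n$ sends the $n+1$ facets to $n+1$ vectors in $(\Z_2)^n$ of which any $n$ are a basis, and such data are unique up to $GL(n,\Z_2)$-equivalence, giving a unique small cover, namely $\R\mathbb{P}^n=S^n/\{\pm 1\}$. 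Thus $M^n$ is diffeomorphic to $\R\mathbb{P}^n$.
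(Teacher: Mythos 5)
Your proof is correct and follows essentially the same route as the paper: Corollary~\ref{Cor:Ricci} gives simple connectivity of $\R\mathcal{Z}_{P^n}$ and hence $\R\mathcal{Z}_{P^n}\cong S^n$, and in the small-cover case Smith theory applied to the free $(\Z_2)^{r-n}$-action on $S^n\cong\R\mathcal{Z}_{P^n}$ forces $r-n\le 1$, whence $P^n=\Delta^n$ and $M^n\cong\R\mathbb{P}^n$. You merely make explicit two steps the paper leaves implicit, namely that $r\ge n+1$ always (so $r=n+1$ exactly), and that the unique small cover over $\Delta^n$ is $\R\mathbb{P}^n$.
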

 \begin{proof}
   If $\R\mathcal{Z}_{P^n}$ admits a
    Riemannian metric with positive constant sectional
    curvature, then it is simply connected by Corollary~\ref{Cor:Ricci}. So it is diffeomorphic to $S^n$.
    If a small cover $M^n$ over $P^n$ admits a Riemannian metric with positive constant sectional
    curvature, so does $\R\mathcal{Z}_{P^n}$. Then $\R\mathcal{Z}_{P^n}$ is diffeomorphic to $S^n$ and
    $\R\mathcal{Z}_{P^n}$ is a universal covering space of $M^n$.
    In addition, Corollary~\ref{Cor:Ricci} tells us that the fundamental group of $M^n$ is
     isomorphic to $(\Z_2)^{r-n}$ where $r$ is the number of facets of $P^n$.
    So $M^n$ is the quotient space of $\R\mathcal{Z}_{P^n} \cong S^n$ by a free
    $(\Z_2)^{r-n}$-action. But by the classical Smith's theory (see~\cite{Smith44}), we must have $r-n \leq 1$.
    So $P^n$ must be an $n$-dimensional simplex and then $M^n$ is the $n$-dimensional
     real projective space $\R \mathbb{P}^n$.
 \end{proof}

 The following geometric problem for small covers should be interesting to study.\nn

  \noindent  \textbf{Problem-1:} find out all
  the small covers (or real moment-angle manifolds) which admit Riemannian metrics with positive
  sectional or Ricci curvature in each dimension.\nn

   It is well known that the only $2$-neighborly simple polytopes
    in dimension $2$ and $3$ are the $2$-simplex and $3$-simplex.
    So by Corollary~\ref{Cor:Ricci},
    the only small covers in dimension $2$ and $3$
    that admit Riemannian metrics with positive Ricci curvature are $\R \mathbb{P}^2$ and $\R \mathbb{P}^3$.
  But in dimension $\geq 4$, the answer to Problem-1 is not so clear.
  In particular, it would be interesting to see if there exists a small cover that admits a positively curved Riemannian metric but that is not homeomorphic to a real projective space.\nn

     \subsection{Nonnegative curvature}

  By the Cheeger-Gromoll splitting theorem
  (see~\cite{ChgGrom71}), the fundamental group of any compact Riemannian
  manifold with nonnegative Ricci curvature is virtually abelian.
  This fact leads to the following description of the fundamental groups of real
  moment-angle manifolds and small covers that admit Riemannian metrics with nonnegative Ricci
  curvature.
\n

  \begin{prop}
  Let $P^n$ be an $n$-dimensional simple convex polytope with $r$ facets.
   If the real moment-angle manifold $\R\mathcal{Z}_{P^n}$ of $P^n$ admits a
   Riemannian metric with nonnegative
   Ricci curvature, then $\pi_1(\R\mathcal{Z}_{P^n})$ is isomorphic to $\Z^l$ for some
   $l\le r/2$.
  Similarly, if a small cover $M^n$ over $P^n$ admits a Riemannian metric
    with nonnegative Ricci curvature, then there is an exact sequence
 \[
       1 \longrightarrow  \Z^l  \longrightarrow \pi_1(M^n) \longrightarrow (\Z_2)^{r-n} \longrightarrow 1
 \]
  where $l\le r/2$.
  \end{prop}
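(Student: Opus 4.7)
The plan is to reduce both assertions to Corollary~\ref{Cor:virtually-nilpotent} by invoking the Cheeger--Gromoll splitting theorem. First, since $\R\mathcal{Z}_{P^n}$ is compact, a Riemannian metric of nonnegative Ricci curvature forces $\pi_1(\R\mathcal{Z}_{P^n})$ to contain a free abelian subgroup of finite index by Cheeger--Gromoll; in particular this fundamental group is virtually nilpotent. Corollary~\ref{Cor:virtually-nilpotent} then immediately yields $\pi_1(\R\mathcal{Z}_{P^n}) \cong \Z^l$ for some $l \leq r/2$, which establishes the first assertion.

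For the second assertion I would exploit the fact, noted in the paragraph following~\eqref{Equ:moment-small}, that $\R\mathcal{Z}_{P^n}$ is a regular $(\Z_2)^{r-n}$-covering of any small cover $M^n$ over $P^n$. If $M^n$ carries a Riemannian metric of nonnegative Ricci curvature, pulling this metric back along the finite covering map $\R\mathcal{Z}_{P^n}\to M^n$ equips $\R\mathcal{Z}_{P^n}$ with a locally isometric metric, which therefore also has nonnegative Ricci curvature. Applying the first part gives $\pi_1(\R\mathcal{Z}_{P^n}) \cong \Z^l$ with $l \leq r/2$, and substituting this into the exact sequence~\eqref{Equ:moment-small} produces the desired short exact sequence
\[
1 \longrightarrow \Z^l \longrightarrow \pi_1(M^n) \longrightarrow (\Z_2)^{r-n} \longrightarrow 1.
\]

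I do not expect any serious obstacle: the argument is a formal consequence of Cheeger--Gromoll, Corollary~\ref{Cor:virtually-nilpotent}, and the covering relation between $\R\mathcal{Z}_{P^n}$ and $M^n$. The only routine point to verify is that the pulled-back metric inherits the Ricci curvature bound, which is immediate since the covering projection is a local isometry with respect to the pulled-back metric; once this is noted, the proposition follows by direct substitution into previously established exact sequences.
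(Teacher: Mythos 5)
Your proof is correct and follows essentially the same route as the paper: both apply the Cheeger--Gromoll splitting theorem to get virtual abelianness of $\pi_1(\R\mathcal{Z}_{P^n})$, invoke Corollary~\ref{Cor:virtually-nilpotent} to identify this group as $\Z^l$ with $l\le r/2$, and then handle the small cover case by pulling the metric back along the finite covering $\R\mathcal{Z}_{P^n}\to M^n$ and substituting into the exact sequence~\eqref{Equ:moment-small}.
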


  \begin{proof}
  If $\R\mathcal{Z}_{P^n}$ admits a Riemannian metric with nonnegative Ricci curvature,
  then $\pi_1(\R\mathcal{Z}_{P^n})$ is virtually abelian as remarked above.
  Therefore, the former statement in the proposition follows from
  Corollary~\ref{Cor:virtually-nilpotent}.
  If a small cover $M^n$ over $P^n$ admits a Riemannian metric with nonnegative Ricci curvature,
  then so does $\R\mathcal{Z}_{P^n}$ because $\R\mathcal{Z}_{P^n}$ is a finite cover of $M^n$.
  Therefore, the latter statement in the proposition follows from \eqref{Equ:moment-small}
  and the former statement.
  \end{proof}
  \n

  \begin{exam} \label{thm:NN-Curv}
   The real moment-angle manifold over a product of simplices
   $\Delta^{n_1}\times\cdots \times \Delta^{n_m}$ is a product of
   standard spheres $S^{n_1}\times \cdots\times S^{n_m}=:S$.
   In the product, let each sphere $S^{n_i} \subset \R^{n_i +1}$
  be equipped with the standard Riemannian metric
    whose isometry group is the orthogonal group $O(n_i+1,\R)$.
  Then $S$ is a nonnegatively curved Riemannian manifold with respect
  to the product metric.\n

 A generalized real Bott manifold $B_m$ discussed in Example~\ref{Exam:Bott} is a small cover over
 $\Delta^{n_1}\times\cdots \times \Delta^{n_m}$.  It is known that $B_m$ is the quotient of $S$
 by a free $(\Z_2)^m$-action on it (see \cite[Proposition 6.2]{SuyMasDong10})
 and one can easily see that the $(\Z_2)^m$-action
 preserves the product metric on $S$. So the quotient space of
  this free $(\Z_2)^m$-action, that is $B_m$,
   inherits a nonnegatively curved Riemannian metric from $S$.
  Noitce that the product metric on $S$ has positive Ricci curvature if and only if $n_1,\cdots,
   n_m>1$. So $B_m$ admits a Riemannian metric with positive Ricci curvature
  if and only if $n_1,\cdots,
   n_m>1$.\end{exam}
   \n

    \begin{rem}
       It is possible that $B_m$ is the total space of several different
       generalized real Bott towers.
        Therefore, we will get several different nonnegatively curved
        Riemannian metrics on $B_m$ which may not be isometric.
    \end{rem}
    \n

 \noindent  \textbf{Problem-2:} find out all
  the small covers (or real moment-angle manifolds) which admit Riemannian metrics with nonnegative
  sectional or Ricci curvature in each dimension.\nn

    In dimension $2$ and dimension $3$, the small covers which
  admit Riemannian metrics with nonnegative Ricci curvature are
  exactly the generalized real Bott manifolds. This follows from the
  classification of $3$-dimensional compact Riemannian manifolds with nonnegative Ricci curvature
  in~\cite{Hamil86}. So by Example~\ref{thm:NN-Curv}, we can conclude that
  all the nonnegatively curved small covers (real moment-angle manifolds) in dimension $2$ and dimension $3$ are exactly
 generalized real Bott manifolds (product of spheres). But in dimension $\geq 4$, the answer
 to Problem-2 is not so easy. In particular, it is interesting to see
 if there exists any small cover which admits a nonnegatively curved Riemannian
 metric but not homeomorphic to any generalized real Bott
 manifold.\nn

 \subsection{Non-positive and negative curvature}\ \n

 If a real moment-angle manifold
 $\R \mathcal{Z}_P$ (or a small cover $M$) of a simple polytope $P$ admits a
 non-positive sectional curvature, the
 Cartan-Hadamard theorem implies that its universal covering space
 is diffeomorphic to an Euclidean space. Hence $\R \mathcal{Z}_P$
 is aspherical and so $P$ is a flag polytope (by~\cite[Theorem 2.2.5]{DavJanScott98}).
 Conversely, if $P$ is a flag polytope,~\cite[Theorem 2.2.3]{DavJanScott98}
  tells us that $\R \mathcal{Z}_P$ admits a piecewise Euclidean metric which is nonpositively curved
  (as a metric space).
 \nn

 \textbf{Problem-3:} For a flag simple polytope $P$, does there exist a nonpositively curved
  Riemannian metric on $\R \mathcal{Z}_P$ (or any small cover over $P$)? \nn

  In addition, a theorem due to Preissmann~\cite{Press43} says that any abelian subgroup of the fundamental group of a
 negatively curved compact Riemannian manifold is
 infinite cyclic (also see~\cite[section 9.3]{Burago2001}). Using this fact, we can easily
 show the following.\nn

 \begin{prop}\label{Prop:Negative-Curv}
  If the real moment-angle manifold $\R \mathcal{Z}_{P}$ (or a small cover)
  of a simple polytope $P$ admits a
   negatively curved Riemannian metric, then no $2$-face of $P$ can be a $3$-gon or a $4$-gon.
 \end{prop}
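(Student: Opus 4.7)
The plan rests on two classical inputs: (i) a compact Riemannian manifold of negative sectional curvature is aspherical by Cartan--Hadamard, and (ii) Preissmann's theorem, which says the fundamental group of such a manifold contains no non-cyclic abelian subgroup. The strategy is to reduce to a single $2$-face and then either derive a flagness contradiction (in the $3$-gon case) or exhibit an explicit $\Z^2$ subgroup (in the $4$-gon case).

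First, if $\R\mathcal{Z}_{P}$, or a small cover $M^n$ over $P$, admits a negatively curved metric, then it is aspherical by Cartan--Hadamard. Since $\R\mathcal{Z}_P$ is a finite regular covering of any small cover over $P$ via \eqref{Equ:moment-small}, in either case $\R\mathcal{Z}_P$ is aspherical, and Proposition~\ref{Prop:Asph-Flag} forces $P$ to be flag. Iterating Lemma~\ref{Lem:Face-Flag} then shows that every $2$-face $f$ of $P$ is itself flag. A triangle is not a flag polygon, since its three edges pairwise share vertices but have empty triple intersection, so no $2$-face of $P$ can be a $3$-gon. This settles the $3$-gon case.

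For the $4$-gon case, suppose for contradiction that some $2$-face $f$ of $P$ is a quadrilateral with edges $e_1,e_2,e_3,e_4$ in cyclic order. From the defining relations of the Coxeter system of the square one reads off $W_f\cong \widetilde{A}_1\times\widetilde{A}_1$: each pair of opposite edges $\{e_1,e_3\}$ and $\{e_2,e_4\}$ generates a free product $\widetilde{A}_1$, while all adjacent edges commute. Hence $\pi_1(\R\mathcal{Z}_f)=[W_f,W_f]\cong \Z\times\Z$. Iterating the injection argument used in the proof of Proposition~\ref{prop:flag-polytop-solv} down a chain of flag faces from $P$ to $f$ produces an injective homomorphism $W_f\hookrightarrow W_P$, and hence an embedding $\pi_1(\R\mathcal{Z}_f)\hookrightarrow \pi_1(\R\mathcal{Z}_P)$. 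In the small-cover case, the exact sequence \eqref{Equ:moment-small} embeds this further into $\pi_1(M^n)$. We therefore obtain a $\Z^2$ subgroup of the fundamental group of the negatively curved manifold, contradicting Preissmann's theorem.

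The main obstacle is maintaining injectivity while descending from $W_P$ to $W_f$: Lemma~\ref{Lem:Face-Flag}, applied at each step of the descent, ensures that no spurious Coxeter relations arise in $W_P$ among the generators associated with the four ``extra'' facets cutting out the edges of $f$. Concretely, the two pairs of opposite edges of $f$ must correspond to pairs of non-adjacent facets of $P$, so that the commutators $(s_1s_3)^2$ and $(s_2s_4)^2$ have infinite order, commute with each other, and generate the required $\Z^2$. Once this flag-descent injectivity is in place, the contradiction with Preissmann is immediate.
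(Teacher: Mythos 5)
Your proof is correct and follows essentially the same route as the paper's: Cartan--Hadamard gives asphericality, Proposition~\ref{Prop:Asph-Flag} gives flagness of $P$, Lemma~\ref{Lem:Face-Flag} rules out $3$-gon $2$-faces, and the injection $\pi_1(\R\mathcal{Z}_f)\hookrightarrow\pi_1(\R\mathcal{Z}_P)$ from the proof of Proposition~\ref{prop:flag-polytop-solv} together with Preissmann's theorem rules out $4$-gon $2$-faces. The only cosmetic difference is that for the small-cover case the paper simply pulls the negatively curved metric back to the finite cover $\R\mathcal{Z}_P$ and runs the argument there, whereas you keep the $\Z^2$ and push it forward into $\pi_1(M^n)$ via \eqref{Equ:moment-small}; both are equally valid.
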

 \begin{proof}
 If $\R \mathcal{Z}_{P}$ admits a negatively curved Riemannian
   metric, then it follows from Cartan-Hadamard theorem that  $\R \mathcal{Z}_{P}$ is aspherical.
   So the simple polytope $P$ is flag by Proposition~\ref{Prop:Asph-Flag}.
   Then by Lemma~\ref{Lem:Face-Flag}, any $2$-face $f$ of $P$ must be flag.
  So $f$ can not be a $3$-gon.
   In addition, since $P$ is flag, the proof of
   Proposition~\ref{prop:flag-polytop-solv}
   implies that there is an injective group homomorphism from $\pi_1(\R \mathcal{Z}_{f})$
   into $\pi_1(\R \mathcal{Z}_{P})$. This implies that $f$ can not be a
   $4$-gon, because otherwise $\pi_1(\R \mathcal{Z}_{f}) \cong \Z \oplus \Z$
   and so $\pi_1(\R \mathcal{Z}_{P})$ contains an abelian subgroup $\Z \oplus \Z$
   which contradicts the Preissmann's theorem mentioned above.
  Finally, if a small cover over $P$ admits a negatively curved Riemannian
   metric, then so does $\R \mathcal{Z}_{P}$.
 \end{proof}

 \n

  \begin{prop}\label{Prop:Hyperbolic}
   The real moment-angle manifold $\R \mathcal{Z}_{P}$ (or a small cover)
   over a $3$-dimensional simple convex polytope $P$ admits a
  hyperbolic structure (i.e. Riemannian metric with negative constant sectional curvature) if and
  only if $P$ has no prismatic $3$-circuits or $4$-circuits.
  \end{prop}
    A \emph{prismatic $k$-circuit} on $P$ is a simple closed curve $\Gamma$ formed
    of $k$-edges of $P^*$ (the dual simplicial polytope of $P$) so
    that all of the endpoints of the edges of $P$ intersected by
    $\Gamma$ are distinct. The famous \emph{Andreev theorem} (see~\cite{Andreev70, Pog67, RoHubDunb07})
      says that $P$ can be realized as a right-angled polytope in
    the hyperbolic $3$-space if and only if $P$ has no prismatic $3$-circuits or
    $4$-circuits.
  \begin{proof}
       If $P$ has no prismatic $3$-circuits or $4$-circuits,
       then $P$ can be realized as a right-angled polytope in
    the hyperbolic $3$-space. So
      $\R \mathcal{Z}_{P}$ (or any small cover over $P$)
      will get a hyperbolic structure which is invariant under
    the canonical $\Z_2$-torus actions on it. \n

    Conversely, assume $\R \mathcal{Z}_{P}$ (or a small cover
   over $P$) admits a hyperbolic structure.
   It was shown in~\cite{DinkLeeb09} that
    any smooth action by a finite
    group on a closed hyperbolic $3$-manifold is smoothly conjugate to an
   isometric action. So we can assume that the
    canonical $\Z_2$-torus action on $\R \mathcal{Z}_{P}$ (or the small cover)
  is isometric with respect to the hyperbolic structure.
   Then since the $\Z_2$-torus action is locally standard,
   the orbit space $P$ has a natural structure of a right-angled hyperbolic
   polyhedron. Then Andreev's theorem implies
    that $P$ has no prismatic $3$-circuits or $4$-circuits.
  \end{proof}
  \nn

 \begin{rem}
    There is no analogue of Andreev theorem in any dimension $\geq 4$.
    So it is not clear how to judge the existence of hyperbolic structures on
    real moment-angle manifolds or small covers in general.\\
  \end{rem}

  \section*{Acknowledgements}
   The authors want to thank Y.~Kamishima and J.~B.~Lee for helpful
   comments on infra-solvmanifolds, and thank Z.~L\"u and J.~M.~Ma for comments on
   Proposition~\ref{Prop:Hyperbolic}.
   \\

\end{document}